\DeclareMathAlphabet{\mathpzc}{OT1}{pzc}{m}{it}
\DeclareMathAlphabet{\mathbf}{OT1}{cmr}{bx}{it}
\newcommand{\va}{{\mathbf a}}
\newcommand{\vb}{{\mathbf b}}
\newcommand{\vc}{{\mathbf c}}
\newcommand{\ve}{{\mathbf e}}
\newcommand{\vf}{{\mathbf f}}
\newcommand{\vg}{{\mathbf g}}
\newcommand{\vh}{{\mathbf h}}
\newcommand{\vr}{{\mathbf r}}
\newcommand{\vs}{{\mathbf s}}
\newcommand{\vu}{{\mathbf u}}
\newcommand{\vv}{{\mathbf v}}
\newcommand{\vw}{{\mathbf w}}
\newcommand{\vx}{{\mathbf x}}
\newcommand{\vy}{{\mathbf y}}
\newcommand{\vz}{{\mathbf z}}
\newcommand{\vnull}{\boldsymbol{0}}
\newcommand{\spK}{{\cal K}}
\newcommand{\spQ}{{\cal Q}}
\renewcommand{\d}{\,\mathrm{d}}
\newcommand{\N}{\mathbb{N}}
\newcommand{\R}{\mathbb{R}}
\newcommand{\Rn}{\mathbb{R}^n}
\newcommand{\Rnn}{\mathbb{R}^{n \times n}}
\newcommand{\C}{\mathbb{C}}
\newcommand{\V}{\mathcal{V}}
\DeclareMathOperator{\Span}{span}
\DeclareMathOperator{\colspan}{colspan}
\newcommand{\Rsq}[1]{\mathbb{R}^{#1 \times #1}}
\newcommand{\Rg}[2]{\mathbb{R}^{#1 \times #2}}
\newcommand{\br}[1]{\mathopen{}\left( #1 \right)\mathclose{}}
\newcommand{\bbr}[1]{\mathopen{}\left[ #1 \right]\mathclose{}}
\newcommand{\cbr}[1]{\mathopen{}\left\{ #1 \right\}\mathclose{}}
\newcommand{\abs}[1]{\mathopen{}\left\vert #1 \right\vert\mathclose{}}
\newcommand{\norm}[2]{\mathopen{}\left\Vert #1 \right\Vert_{#2}\mathclose{}}
\newcommand{\OO}[1]{\mathcal{O}\br{#1}}
\newcommand{\thetamax}{\max_{0\leq\theta\leq t}}
\newcommand{\errnormt}{\norm{\vxi_m\br{t}}{}}
\newcommand{\errhatnormt}{\|\vxi_m\br{t}\|}
\newcommand{\inv}{^{-1}}
\newcommand{\tp}{^{\top}}
\newcommand{\her}{^{\ast}}
\newcommand{\kry}[2]{\mathcal{K}_{#1}\br{#2}}
\newcommand{\kryab}[1]{\kry{#1}{A,\vb}}
\newcommand{\sprod}[2]{\langle #1, #2 \rangle}
\newcommand{\uH}{\underline{H}}
\newcommand{\uG}{\underline{G}}
\newcommand{\uK}{\underline{K}}
\newcommand{\vxi}{{\boldsymbol{\xi}}}
\newcommand{\Hhat}{\widehat{H}}
\newcommand{\Chat}{\mathpzc{C}}
\newcommand{\ym}[1]{\vy_m\br{#1}}
\newcommand{\ymt}{\vy_m\br{t}}
\newcommand{\xmt}{\vx_m\br{t}}
\newcommand{\rmt}{\vr_m\br{t}}
\newcommand{\xhatmt}{\vx_m\br{t}}
\newcommand{\rhatmt}{\vr_m\br{t}}
\newcommand{\false}{\mathtt{false}}
\newcommand{\true}{\mathtt{true}}
\newcommand{\update}{\mathtt{update}}
\newcommand{\slope}{\sigma}
\newcommand{\tol}{\mathtt{tol}}
\newcommand{\rtol}{\mathtt{rtol}}
\newcommand{\done}{\mathtt{converged}}
\newcommand{\bbeta}{\boldsymbol{\beta}}
\newcommand{\wQ}{\mathpzc{Q}}
\newcommand{\betahatmt}{\bbeta_m\br{t}}
\newcommand{\pV}{\mathpzc{V}}
\newcommand{\pU}{\mathpzc{U}}
\newcommand{\pB}{\mathpzc{B}}
\newcommand{\pp}{\mathpzc{p}}
\newcommand{\ph}{\mathpzc{h}}
\newcommand{\pE}{\mathpzc{E}}
\newcommand{\pg}{\mathpzc{g}}
\newcommand{\pR}{\mathpzc{R}}
\newcommand{\FOM}{\texttt{FOM}}
\newcommand{\phiRT}{\texttt{phiRT30}}
\newcommand{\expRT}{\texttt{expRT30}}
\newcommand{\restartrand}{\texttt{restart-rand}}
\newcommand{\KIOPS}{\texttt{KIOPS}}
\newcommand{\sFOM}{\texttt{sFOM}}
\newcommand{\KIOPSs}{\texttt{KIOPS250}}
\newcommand{\sFOMs}{\texttt{sFOM250}}
\newcommand{\KIOPSb}{\texttt{KIOPS500}}
\newcommand{\sFOMb}{\texttt{sFOM500}}
\newcommand{\mvecs}{\texttt{mvecs}}
\newcommand{\sprods}{\texttt{s-prods}}
\newcommand{\nprods}{\texttt{n-prods}}
\newcommand{\mfuns}{\texttt{mfuns}}
\newcommand{\mfunsize}{\texttt{mfunsize}}
\newcommand{\nops}{\texttt{n-ops}}
\newtheorem{remarksimple}[theorem]{Remark}
\let\oldremarksimple\remarksimple
\renewcommand{\remarksimple}{\oldremarksimple\normalfont}
\newtheorem{experimentsimple}[theorem]{Experiment}
\let\oldexperimentsimple\experimentsimple
\renewcommand{\experimentsimple}{\oldexperimentsimple\normalfont}
\newtheorem{examplesimple}[theorem]{Example}
\let\oldexamplesimple\examplesimple
\renewcommand{\examplesimple}{\oldexamplesimple\normalfont}
\newenvironment{example}{\begin{examplesimple}}{\hfill$\diamond$\end{examplesimple}}
\tikzset{
  nomorepostactions/.code={\let\tikz@postactions=\pgfutil@empty},
  mymark/.style 2 args={decoration={markings,
    mark= between positions 0 and 1 step (1/9)*\pgfdecoratedpathlength with{%
        \tikzset{#2,every mark}\tikz@options
        \pgfuseplotmark{#1}%
      },  
    },
    postaction={decorate},
    /pgfplots/legend image post style={
        mark=#1,mark options={#2},every path/.append style={nomorepostactions}
    },
  },
}
\pgfplotsset{compat=1.17}
\Crefname{remarksimple}{Remark}{Remarks}
\Crefname{subsection}{Section}{Sections}
\algnewcommand{\Input}{\Statex \textbf{Input:} }
\algnewcommand{\Output}{\Statex \textbf{Output:} }
\title{A general framework for Krylov ODE residuals with applications to randomized Krylov methods\thanks{This work was supported by Deutsche Forschungsgemeinschaft through DFG grant 538686094 -- ``Matrix functions via randomized sketching''.}}
\author{Emil Krieger\thanks{School of Mathematics and Natural Sciences, Bergische Universit\"at Wuppertal, 42097 Wuppertal, Germany, \texttt{\{krieger,marcel\}@uni-wuppertal.de}.} \and Marcel Schweitzer$^\dagger$\!\!\!}
\begin{document}
\maketitle

\pagestyle{myheadings} \thispagestyle{plain}
\markboth{E.~KRIEGER AND M.~SCHWEITZER}{RESIDUAL-BASED RANDOMIZED KRYLOV METHODS FOR ODES}

\begin{abstract}
Randomized Krylov subspace methods that employ the sketch-and-solve paradigm to substantially reduce orthogonalization cost have recently shown great promise in speeding up computations for many core linear algebra tasks (e.g., solving linear systems, eigenvalue problems and matrix equations, as well as approximating the action of matrix functions on vectors) whenever a nonsymmetric matrix is involved. An important application that requires approximating the action of matrix functions on vectors is the implementation of exponential integration schemes for ordinary differential equations. In this paper, we specifically analyze randomized Krylov methods from this point of view. In particular, we use the residual of the underlying differential equation to derive a new, reliable a posteriori error estimate that can be used to monitor convergence and decide when to stop the iteration. To do so, we first develop a very general framework for Krylov ODE residuals that unifies existing results, simplifies their derivation and allows extending the concept to a wide variety of methods beyond randomized Arnoldi (e.g., rational Krylov methods, Krylov methods using a non-standard inner product, \dots). In addition, we discuss certain aspects regarding the efficient implementation of sketched Krylov methods. Numerical experiments on large-scale ODE models from real-world applications illustrate the use of the sketched residual norm as stopping criterion as well as the general competitiveness of sketched Krylov methods for ODEs in comparison to other Krylov-based methods.
\end{abstract}

\begin{keywords}
Krylov subspace methods, ordinary differential equations, residual, randomized sketching, exponential integration
\end{keywords}

\begin{AMS}
65F60, % Numerical computation of matrix exponential and similar matrix functions
68W20, % Randomized algorithms
65L05, % Numerical methods for initial value problems involving ordinary differential equations
65F25  %Orthogonalization in numerical linear algebra
\end{AMS}

\section{Introduction}\label{sec:intro}
Exponential time integration schemes~\cite{HochbruckOstermann2010} are among the most promising class of numerical methods for the solution of stiff, large-scale systems of ordinary differential equations (ODEs). Their implementation crucially relies on the ability to efficiently evaluate the action of matrix functions~\cite{Higham2008} on vectors---the action of the matrix exponential and $\varphi$-functions in the case of first-order problems~\cite{HochbruckOstermann2010} and compositions of square roots, inverses, and trigonometric functions in the case of second-order problems~\cite{Gautschi1961,HochbruckLubich1999}. The growing success of and interest in exponential integrators can therefore in part also be attributed to algorithmic advances for matrix functions in the last two decades~\cite{al2011computing,fasi2024challenges,FrommerGuettelSchweitzer2014a,GuettelKressnerLund2020,HighamAlMohy2010}.

When $A \in \Rnn$ is large and sparse (or otherwise structured), the matrix function $f(A)$ cannot be computed explicitly due to enormous computational cost and memory requirements. Instead, one aims to directly approximate $f(A)\vb$, where $\vb \in \Rn$ is a given vector, by an iterative method---typically a Krylov subspace method. When $A$ is of moderate size or highly structured, such that shifted linear systems $(A+\sigma I)\vx = \vb$ can be efficiently treated by a direct solver, rational Krylov subspace methods are often the method of choice~\cite{Guettel2010,GuettelKnizhnerman2013}. In other cases---also, e.g., when $A$ is given only implicitly by some routine that returns the result of the matrix-vector product $\vv \mapsto A\vv$---one is restricted to the use of polynomial Krylov methods~\cite{DruskinKnizhnerman1989,GaudreaultRainwaterTokman2018,Saad1992,sidj98}, which typically converge much more slowly. This can be a huge problem---in particular when $A$ is nonsymmetric---as storage requirements and orthogonalization cost grow with the number of iterations. 

One possible remedy for this problem is the use of an \emph{incomplete orthogonalization} within Krylov methods (instead of the usual, full Gram--Schmidt orthogonalization that is typically used in the Arnoldi method), which has been demonstrated to work quite successfully in the context of exponential integrators~\cite{GaudreaultRainwaterTokman2018,Koskela2015}. It does, however, typically lead to a further delay in convergence and can necessitate the use of restarting (or ``sub time stepping'') techniques in order to reach the desired accuracy.

A rather recent new avenue in algorithms for approximating the action of a matrix function on a vector is the use of \emph{randomized methods}, in particular those based on the \emph{sketch-and-solve paradigm}~\cite{MartinssonTropp2020,woodruff2014sketching}. In the context of Krylov methods, these approaches typically aim at substantially reducing orthogonalization cost, while avoiding the delay of convergence observed in methods with incomplete orthogonalization; see, e.g.,~\cite{balabanov2022randomized,burke2025gmres,cortinovis2024speeding,guidotti2025accelerating,GuettelSchweitzer2023,guttel2024sketch,nakatsukasa2024fast,palitta2025sketched,palitta2025mateq}, and the references therein. 

While these methods experimentally show very promising results, they are still in their infancy. There is still a large gap in comparison to classical Krylov methods without randomization, both regarding their theoretical understanding and their adoption in production-level codes. One particular shortcoming that might prevent a more widespread adoption is the lack of reliable error estimates and stopping criteria. Most publications on the topic so far either completely omit a discussion of this topic or employ a heuristic error estimate based on the difference of iterates~\cite{guidotti2025accelerating,GuettelSchweitzer2023}. Such an estimate can already be unreliable for classical Krylov methods, and even more so in the presence of randomization.

For matrix functions that are connected to an underlying (first- or second-order) ODE, an established way to monitor the progress of the method is to keep track of the \emph{ODE residual}. For classical Krylov methods, it is well known that the residual (or its norm) can be cheaply computed in terms of quantities arising naturally from the Arnoldi process~\cite{bgh13,bokn20,bks23,bkt21,celledoni1997krylov}. In this paper, we introduce a general framework for Krylov residuals, which in particular allows us to adapt the above-mentioned results in such a way that they can straightforwardly be transferred to randomized Krylov methods. We further show that the Krylov approximation error norm can be rigorously bounded in terms of the sketched residual norm, demonstrating that the norm of the sketched residual can be used as a reliable, non-heuristic stopping criterion.

In addition to the contributions mentioned above, we discuss some possible refinements in the implementation of sketched Krylov methods that can improve their efficiency and numerical stability.

The remainder of this paper is organized as follows. In \Cref{sec:basics} we introduce basic material on Krylov subspace methods for matrix functions, before specifically focusing on problems with an underlying ODE. Based on this, we introduce a general framework for Krylov-based ODE residuals in \Cref{sec:framework}, which unifies existing results from the literature and allows us to obtain conceptually simpler proofs for important properties. We specifically apply this framework to sketched Krylov methods in \Cref{sec:sketching_ode} and illustrate how it can be used to obtain a rigorous a posteriori error estimate for these methods. In this section, we also discuss the efficient implementation of residual-based sketched Krylov methods, focusing in particular on the possibility of introducing restarts in order to improve numerical stability and efficiency. Numerical experiments on several large-scale real-world problems are presented in \Cref{sec:experiments}. Concluding remarks and an outlook on potential future research topics are given in \Cref{sec:conclusions}.

\subsection{Notation and conventions}\label{subsec:notation}
Throughout the paper, we use the following notations. By $\langle\cdot,\cdot\rangle$, we denote the Euclidean inner product and by $\|\cdot\|$ we denote the vector or matrix norm it induces. The transpose of a matrix $M$ is denoted by $M\tp$. The spectrum of a matrix $M$ is denoted by $\sigma(M)$. The imaginary part of a complex number $z$ is denoted as $\Im(z)$. The $m$th canonical unit vector is denoted by $\ve_m$. The $i$th entry of a vector $\vv$ is denoted by $[\vv]_i$, and when indexing vectors or matrices, we sometimes use ``MATLAB-style notation'', i.e., $i:j$ refers to all indices $i,i+1,\dots,j-1,j$.
In the context of block Krylov methods (with block size $\ell$), we denote ``block vectors'' $\pU \in \Rg{n}{\ell}$ by uppercase calligraphic font and ``block scalars'' $\pg \in \Rsq{\ell}$ by lowercase calligraphic font.

For ease of exposition, we restrict to real-valued matrices and vectors throughout the manuscript, although most results are rather straightforwardly transferable to the complex-valued case.

\section{Basic Material}\label{sec:basics}
In this section, we present basic material on matrix functions and Krylov subspace methods, as well as their application in the context of solving ODEs.

\subsection{Functions of matrices}\label{subsec:matfun}
Matrix functions can be defined in many different ways. The most common definitions are based on the Jordan canonical form, Hermite interpolating polynomials, or the Cauchy integral formula; we refer to~\cite[Section~1.2]{Higham2008} for a thorough treatment. 

As we are mostly interested in entire functions in this work, we only recall the definition based on the Cauchy integral formula, which applies to functions $f$ that are analytic in a region $\Omega \subseteq \C$ that contains the spectrum $\sigma(A)$ of $A \in \Rsq{n}$. We can then define $f(A)$ via
\[
    f(A) := \frac{1}{2\pi i}\int_{\Gamma} f(z)(z I - A)^{-1} \d z,
\]
where $\Gamma \subset \Omega$ is a path that winds around the spectrum of $A$ exactly once.

It is important to note that even when $A$ is very sparse, $f(A)$ is in general a dense matrix, so that for large $n$, it is not feasible to explicitly form $f(A)$, as it is often impossible to store (in addition to being very costly to compute). Therefore, in applications where the action of $f(A)$ on a vector $\vb$ is required---like in exponential integrators---one typically aims to directly approximate $f(A)\vb$ by an iterative method. The most popular such methods are \emph{Krylov subspace methods}, which we brief\/ly introduce next.

\subsection{Krylov methods for matrix functions}\label{sec:krylov}
The backbone of Krylov methods for matrix functions is the \textit{Arnoldi method} \cite{Arnoldi1951}, which computes an orthonormal basis (ONB) of the Krylov subspace $\kryab{m} := \Span\cbr{\vb,A\vb,...,A^{m-1}\vb}$ via a modified Gram--Schmidt approach.

After $m$ iterations, the Arnoldi method yields a matrix $U_m = \bbr{\vu_1,\vu_2,...,\vu_m} \in \Rg{n}{m}$ that contains a nested orthonormal basis of $\kryab{m}$, and an upper Hessenberg matrix $\uG_m \in \Rg{m+1}{m}$ containing the orthogonalization coefficients. These matrices are connected by the \textit{Arnoldi relation}
\begin{equation}\label{eq:arnoldi relation}
    AU_m = U_{m+1}\uG_m = U_mG_m + g_{m+1,m}\vu_{m+1}\ve_m\tp,
\end{equation}
where $G_m \in \Rsq{m}$ is the upper $m \times m$ part of $\uG_m$, i.e.,
\begin{equation*}\label{eq:arnoldi hessenberg structure}
    \uG_m = \begin{bmatrix}
        G_m \\ g_{m+1,m}\ve_m\tp
    \end{bmatrix}.
\end{equation*}
Clearly,~\eqref{eq:arnoldi relation} directly implies 
\begin{equation*}\label{eq:arnoldi identity H_m}
    U_m\tp AU_m = G_m
\end{equation*}
due to the orthogonality of the columns of $U_m$.

Based on these quantities, for a general matrix function $f\br{A}\vb$, the \textit{Arnoldi (or FOM) approximation} is defined as
\begin{equation}\label{eq:fom}
f(A)\vb \approx \vf_m = U_mf\br{U_m\tp A U_m}U_m\tp\vb = U_mf\br{G_m}\ve_1\norm{\vb}{}.
\end{equation}

With $\text{nnz}(A)$ denoting the number of nonzeros in $A$, the Arnoldi method generally has a computational cost of $\OO{\text{nnz}(A)m + nm^2}$ operations, where the first term accounts for the matrix vector products, while the second term accounts for the modified Gram--Schmidt orthogonalization. In many practical situations, e.g., when $A$ results from the finite difference or finite element discretization of a differential operator, $\text{nnz(A)} = \OO{n}$, so that the orthogonalization causes the dominating computational cost. In addition, storing the (dense) ONB requires $\OO{nm}$ memory, which can also be prohibitive if a large number $m$ of iterations is required to reach the desired accuracy.

There exist different theoretical justifications for why~\eqref{eq:fom} is a sensible approximation for $f(A)\vb$. We recapitulate two of the most important and widely used ones here. First, the Arnoldi iteration can alternatively be characterized as
\[
\vf_m = p_{m-1}(A)\vb
\]
where $p_{m-1}$ is the polynomial that interpolates $f$ at the spectrum of $G_m$ (i.e., at the so-called Ritz values) in the Hermite sense; see~\cite[Theorem~3.3]{Saad1992}. Second, the Arnoldi approximation fulfills
\begin{equation}\label{eq:quasiopt}
\|f(A)\vb-\vf_m\| \leq 2(1+\sqrt{2}) \min_{p \in \Pi_{m-1}} \max_{z \in W(A)}\|f(z) - p(z)\|,
\end{equation}
where $\Pi_{m-1}$ denotes the set of all polynomials of degree at most $m-1$ and $W(A)$ is the numerical range (or field of values) of $A$. Inequality~\eqref{eq:quasiopt} easily follows from the Crouzeix--Palencia theorem~\cite{crouzeix2017numerical} and allows us to relate the convergence of the Arnoldi approximation for $f(A)\vb$ to the convergence of polynomial approximations for the scalar function $f$ on $W(A)$; see, e.g.,~\cite{BeckermannReichel2009}.

In the context of solving ODEs, an alternative way of deriving and justifying the Arnoldi approximation~\eqref{eq:fom} arises from imposing a Galerkin condition, as we will illustrate next.

\subsection{Krylov methods for ODEs}\label{sec:residual}
For illustration purposes, in this section we recapitulate the Krylov solution of linear, homogeneous, first-order IVPs 
\begin{equation}\label{eq:simple ivp}
\begin{cases}
\begin{aligned}
\vy'(t) &= -A\vy(t) \text{ on } (0,T],\\
\vy(0) &= \vb_0,
\end{aligned}
\end{cases}
\end{equation}
where $A \in \Rnn$, as this is the simplest model problem fitting into the framework developed in \Cref{sec:framework}. It is well known that the solution of~\eqref{eq:simple ivp} can be written in terms of the matrix exponential as
\begin{equation*}\label{eq:ode_solution_exp}
    \vy(t) = \exp\br{-tA}\vb_0 \text{ for all } t \in [0, T].
\end{equation*}
Assume we want to find an approximate solution $\vy_m$ to~\eqref{eq:simple ivp} in the Krylov subspace $\spK_m(A,\vb_0)$, i.e., $\vy_m: [0,T] \rightarrow \spK_m(A,\vb_0)$. We introduce the corresponding ODE residual
\begin{equation*}\label{eq:res}
    \vr_m(t) := -\vy_m'(t) - A\vy_m(t) \qquad t \in [0, T].
\end{equation*}
Clearly, $\vr_m(t) \equiv \vnull$ would imply that $\vy_m$ solves the IVP~\eqref{eq:simple ivp}.

One possible approach for defining an approximate solution $\vy_m(t)$ is to impose a Galerkin condition, i.e., one demands that
\begin{equation}\label{eq:galerkin}
\vr_m(t) \perp \spK_m(A,\vb_0) \text{ for all } t \in [0, T].
\end{equation}
Using the notation from \Cref{sec:krylov}, in particular letting $U_m$ be an ONB of $\spK_m(A,\vb_0)$, We can write $\vy_m(t) = U_m\vx_m(t)$ for some $\vx_m(t): [0,T] \rightarrow \R^m$. Condition~\eqref{eq:galerkin} then implies
\[
    \vnull = U_m\tp\vr_m(t) = -U_m\tp(U_m\vx_m'(t) - A U_m \vx_m(t)) = -\vx_m'(t) - G_m \vx_m(t),
\]
i.e., $\vx_m$ solves the projected IVP
\begin{equation}\label{eq:simple ivp proj}
\begin{cases}
\begin{aligned}
\vx_m'(t) &= -G_m\vx_m(t) \text{ on } (0,T],\\
\vx_m(0) &= \ve_1\|\vb_0\|,
\end{aligned}
\end{cases}
\end{equation}
where the initial conditions are established by requiring that $\vy_m(0) = \vb_0$, i.e., that the approximation fulfills the initial conditions of the original IVP~\eqref{eq:simple ivp}. The solution of~\eqref{eq:simple ivp proj} is given by
\[
\vx_m(t) = \exp\br{-tG_m}\ve_1\|\vb_0\|,
\]
so that $\vy_m(t) = U_m\exp\br{-tG_m}\ve_1\|\vb_0\|$ is exactly the Arnoldi approximation~\eqref{eq:fom} for $f(z) = \exp(-tz)$, providing another possible justification for why this is a sensible approximation. A very attractive feature of the Arnoldi approximation in the context of ODEs is that the norm of its residual can be used as a reliable and easy-to-compute stopping criterion. It is straightforward to verify that
\begin{equation}\label{eq:residual norm}
    \norm{\vr_m(t)}{} = \abs{g_{m+1,m}\bbr{\vx_m(t)}_m},
\end{equation}
see, e.g.,~\cite{bgh13,bokn20}, which can be readily computed from quantities available from the Arnoldi process. That the residual norm is indeed a suitable stopping criterion for the iteration can be seen as follows: The ODE residual and the \emph{error}
\begin{equation*}\label{eq:error}
    \vxi_m(t) := \vy(t) - \vy_m(t)
\end{equation*}
are related by the semilinear IVP
\begin{equation*}\label{eq:simple ivp error}
\begin{cases}
\begin{aligned}
\vxi_m'(t) &= -A\vxi_m(t) + \vr_m(t) \text{ on } (0,T],\\
\vxi_m(0) &= \vnull.
\end{aligned}
\end{cases}
\end{equation*}
so that, using the variation-of-constants formula, the error can be written as
\begin{equation}\label{eq:variation_of_contants_error}
\vxi_m(t) = \int_0^t \exp(-(t-\theta)A)\vr_m(s) \d\theta.
\end{equation}
Under the standard assumption that there exist constants $C_1 > 0$ and $\omega_1 \in \R$ such that
\begin{equation}\label{eq:bound exp norm}
    \norm{\exp\br{-tA}}{} \leq C_1e^{-t\omega_1} \quad\text{for all } t\geq 0,
\end{equation}
one directly obtains the bound
\begin{equation}\label{eq:bound_error_phi}
    \errnormt \leq C_1 t\varphi_1\br{-t\omega_1}\thetamax \norm{\vr_m\br{\theta}}{}, \quad\text{where}\quad  \varphi_1\br{z} = \frac{e^z-1}{z}
\end{equation}
from~\eqref{eq:variation_of_contants_error}; see~\cite[Lemma~4.1]{bgh13}. Thus, a \emph{uniform} bound on the residual norm on $[0,T]$ directly implies a bound on the \emph{error norm}.

Note that, e.g., when $-A$ is a stable operator with field of values in the left half plane, condition~\eqref{eq:bound exp norm} is satisfied with $\omega_1 = 0$ and $C_1 = 1$. More generally, $\omega_1$ can be taken as the left end point of the field of values of $A$.

Due to the features mentioned above, it is very attractive to work with residual-based methodologies when solving large-scale ODEs using Krylov subspace methods, and there is a large body of work on this topic~\cite{botchev2013block,bgh13,bokn20,bks23,bkt21,celledoni1997krylov}. In the next section, we aim to unify and generalize existing results in a framework that allows us to extend it to other, more general situations, and in particular to Krylov methods based on randomized sketching (cf.\ \Cref{sec:sketching_ode}).

\section{A general framework for Krylov ODE residuals}\label{sec:framework}
In this section, we consider a general semi-linear differential equation of the form
\begin{equation}\label{eq:general_ode}
    \vy^{(p)}(t) = -A\vy(t) + \vw(t) \text{ for } t \in (0,T],
\end{equation}
where $p \in \mathbb{N}$ and $\vw$ is a vector valued function, $\vw(t):(0,T] \rightarrow \R^n$. We assume that we are given the initial conditions 
\[
\vy(0) = \vb_0, \ \vy^\prime(0) = \vb_1, \ \dots, \ \vy^{(p-1)}(0) = \vb_{p-1}
\]
which, together with~\eqref{eq:general_ode}, define an initial value problem.

\subsection{Efficiently computing the Krylov ODE residual and its norm}
There has been a lot of work on obtaining computationally feasible formulas for the residual of Krylov solutions for~\eqref{eq:general_ode}, for many different special cases. E.g., the papers~\cite{bgh13,bokn20} treat the case $p = 1$ and $\vw(t) \equiv \vnull$,~\cite{bkt21,celledoni1997krylov} treat $p=1$ and a constant inhomogeneity $\vw(t) \equiv \vw \neq \vnull$, while~\cite{botchev2013block,bks23} consider $p = 2$. In each case, formulas for the residual are derived by explicit, lengthy computations that need to be adapted and redone from scratch whenever the structure of the ODE or the employed method changes.

In the following, we present a general framework that exposes the principles underlying all approaches mentioned above and therefore allows us to extend them to other differential equations (and, in particular, to other algorithms) with minimal effort. An additional benefit is that the proofs of the results simplify in comparison to those presented in some of the references above.

We assume that we have the subspaces
\[
\V_m \subseteq \V_{m+1} \subseteq \R^n
\]
of dimension $\dim(\V_i) = d_i$, and that we have the inclusion $A\V_m \subseteq \V_{m+1}$. We further assume that $\vb_0,\dots,\vb_{p-1} \in \V_m$ as well as $\vw(t) \in \V_m$ for all $t \in (0,T]$.\footnote{The assumption that $\V_m$ has to include all initial conditions as well as $\vw(t)$ at all time points seems quite restrictive at first. However, for most practically relevant cases, only few vectors are required to actually lie in $\V_{m-1}$. This is discussed later in~\Cref{sec:limitations of the framework}.}

Let $\vy_m: [0,T] \rightarrow \V_m$ be an approximate solution of~\eqref{eq:general_ode} determined by the Galerkin condition
\begin{equation}\label{eq:galerkin_residual}
\vr_m(t) \perp_\ast \V_m \text{ for all } t \in [0,T]
\end{equation}
with respect to some general inner product $\langle \cdot,\cdot\rangle_\ast$ on $\V_{m+1}$, where $\vr_m$ is the residual
\[
\vr_m(t) = -\vy_m^{(p)}(t) - A\vy_m(t) + \vw(t).
\]
In the following, we denote by $U_{m}$ an orthonormal basis of $\V_{m}$ with respect to $\langle \cdot,\cdot\rangle_\ast$ and
assume that $\pU_{m+1} \in \R^{n \times (d_{m+1}-d_{m})}$ is such that $U_{m+1} = [U_m,\pU_{m+1}]$ is an orthonormal basis of $\V_{m+1}$.\footnote{In case $d_i = \dim(\V_i) = i$, as, e.g., for standard Krylov subspaces, we have $d_{m+1}-d_m = 1$. We allow for more general dimensions of the nested subspaces, such that block Krylov approaches and other, more general methods are also captured by the framework.} The adjoint of a matrix $M$ with respect to $\langle \cdot,\cdot\rangle_\ast$ is denoted by $M^\ast$ in the following. Using this notation, we can write
\[
\vy_m(t) = U_m\vx_m(t) \text{ with some } \vx_m:[0,T] \rightarrow \R^m,
\]
and the corresponding residual is thus given by
\begin{equation}\label{eq:residual_x}
\vr_m(t) = -U_m\vx_m^{(p)}(t) - AU_m\vx_m(t) + \vw(t).
\end{equation}

Any method fitting into the above framework can be interpreted as being based on solving a certain projected version of the initial value problem under consideration, as we outline in the following. Left-multiplying~\eqref{eq:residual_x} by $U_m^\ast$ and applying the Galerkin condition~\eqref{eq:galerkin_residual} results in
\[
\vnull = U_m^\ast\vr_m(t) = -\vx_m^{(p)}(t) - U_m^\ast A U_m\vx_m(t) + U_m^\ast\vw(t).
\]
Demanding that the approximate solution $\vy_m$ exactly satisfies the initial conditions posed at $t=0$, i.e., $\vy_m^{(j)}(0) = \vb_j, j = 0,\dots,p-1$, further yields
\[
U_m\vx_m^{(j)}(0) = \vb_j, \qquad j = 0,\dots,p-1.
\]
Again left-multiplying by $U_m^\ast$ gives the projected initial conditions $\vx_m^{(j)}(0) = U_m\her \vb_j$. In summary, we obtain the projected IVP
\begin{equation}\label{eq:projected_ode}
\begin{cases}
\begin{aligned}
    \vx_m^{(p)}(t) &= -U_m\her A U_m\vx_m(t) + U_m\her\vw(t) \text{ on }(0,T], \\
    \vx_m^{(j)}(0) &= U_m\her \vb_j, \ j = 0,\dots,p - 1,
\end{aligned}
\end{cases}
\end{equation}
that determines the function $\vx_m(t)$. Note that in case of the standard Arnoldi method applied to a homogeneous first-order ODE, the IVP~\eqref{eq:projected_ode} agrees with~\eqref{eq:simple ivp proj}.

\begin{theorem}\label{thm:residual}
Let
\begin{itemize}
    \item[(i)] $\V_m \subseteq \V_{m+1}$ with nested orthonormal bases $U_m,U_{m+1}$ such that $A\V_m \subseteq \V_{m+1}$,
    \item[(ii)] $\vb_0,\dots,\vb_{p-1} \in \V_m$ as well as $\vw(t) \in \V_m$ for all $t \in (0,T]$,
    \item[(iii)] $\vy_m(t) := U_m\vx_m(t)$ be an approximate solution of \eqref{eq:general_ode} determined by the projected IVP \eqref{eq:projected_ode} obtained via the Galerkin condition \eqref{eq:galerkin_residual}.
\end{itemize}
Then,
\[
\vr_m(t) = \pU_{m+1}\bbeta_m(t) \quad\text{ with } \quad\bbeta_m(t) = -\pU_{m+1}^\ast A U_m\vx_m(t).
\]
If further there is a subspace $\V_{m-1} \subseteq \V_m$ with basis $U_{m-1}$ such that $A\V_{m-1} \subseteq \V_m$ and $U_m = [U_{m-1}, \pU_m]$, the expression for $\bbeta_m(t)$ simplifies to
\[
\bbeta_m(t) = -\pU_{m+1}^\ast A \pU_m[\vx_m(t)]_{d_{m-1}+1:d_m},
\]
where $\dim(\V_i) = d_i, i \in \{m-1, m\}$.
\end{theorem}
\begin{proof}
Due to the assumptions that $A\V_m \subseteq \V_{m+1}$ and $\vw(t) \in \V_m \subseteq \V_{m+1}$, we clearly have that $\vr_m(t) \in \V_{m+1}$, so that $\vr_m(t) = U_{m+1}\vc_{m+1}(t)$ with some time-dependent coefficient vector $\vc_{m+1}(t)$.  The Galerkin condition~\eqref{eq:galerkin_residual} implies
\begin{align*}\label{eq:proj_res}
\vc_{m+1}(t)    &= U_{m+1}^\ast\vr_m(t) = \begin{bmatrix} U_m^\ast\vr_m(t) \\ \pU_{m+1}^\ast\vr_m(t) \end{bmatrix} \\ 
                &= \begin{bmatrix} \vnull \\ \pU_{m+1}^\ast (-U_m\vx_m^{(p)}(t) - AU_m\vx_m(t) + \vw(t)) \end{bmatrix} \\ 
                &= \begin{bmatrix} \vnull \\ -\pU_{m+1}^\ast A U_m\vx_m(t) \end{bmatrix},
\end{align*}
where we have used the fact that the columns of $\pU_{m+1}$ are orthogonal to $U_m$ and $\vw(t)$ in the last equality. This proves the first assertion. 

For the second assertion, note that under the additional assumptions, the columns of $AU_{m-1}$ lie in $\V_m$ and are therefore orthogonal to the columns of $\pU_{m+1}$. This immediately gives
\[
\pU_{m+1}^\ast A U_m = [\pU_{m+1}^\ast AU_{m-1}, \pU_{m+1}^\ast A\pU_m] = [\vnull\her,\pU_{m+1}^\ast A\pU_m],
\]
i.e., $-\pU_{m+1}^\ast A U_m\vx_m(t) = -\pU_{m+1}^\ast A \pU_m[\vx_m(t)]_{d_{m-1}+1:d_m}$.
\end{proof}

%We note that $\V_{m-1} = \{\vnull\}$, which is the case e.g. for the first iteration of a Krylov method, is captured by the theorem. 
Denoting by $\|\cdot\|_\ast$ the norm that is induced by $\langle\cdot,\cdot\rangle_\ast$, \Cref{thm:residual} allows us to easily obtain $\|\vr_m(t)\|_\ast$.

\begin{corollary}\label{cor:resnorm}
Using the notation of \Cref{thm:residual}, we have
\[
\|\vr_m(t)\|_\ast = \|\bbeta_m(t)\|.
\]
\end{corollary}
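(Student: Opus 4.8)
The plan is to read off the norm directly from the representation of the residual established in \Cref{thm:residual}. By that theorem we have $\vr_m\br{t} = \pU_{m+1}\bbeta_m\br{t}$, where $\pU_{m+1}$ is the block of basis vectors spanning the orthogonal complement of $\V_m$ within $\V_{m+1}$ (with respect to $\langle\cdot,\cdot\rangle_\ast$). The key observation is that $\pU_{m+1}$ has columns that are orthonormal in the $\ast$-inner product, so the map $\bbeta \mapsto \pU_{m+1}\bbeta$ is an isometry from $\R^{b_{m+1}}$ (with the Euclidean norm) into $\br{\V_{m+1}, \|\cdot\|_\ast}$.

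Concretely, first I would write $\|\vr_m\br{t}\|_\ast^2 = \langle \pU_{m+1}\bbeta_m\br{t}, \pU_{m+1}\bbeta_m\br{t}\rangle_\ast$. Next I would use the definition of the adjoint with respect to $\langle\cdot,\cdot\rangle_\ast$ to move $\pU_{m+1}$ across the inner product, obtaining $\langle \pU_{m+1}^\ast\pU_{m+1}\bbeta_m\br{t}, \bbeta_m\br{t}\rangle$, where the outer inner product is now the Euclidean one on coefficient space. Since the columns of $\pU_{m+1}$ form part of the $\ast$-orthonormal basis $U_{m+1}$, we have $\pU_{m+1}^\ast\pU_{m+1} = I$, so this collapses to $\langle\bbeta_m\br{t}, \bbeta_m\br{t}\rangle = \|\bbeta_m\br{t}\|^2$. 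Taking square roots gives the claim.

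The only point requiring any care is the bookkeeping of which inner product is in play at each step: the $\ast$-inner product lives on $\V_{m+1} \subseteq \R^n$, while the norm appearing on the right-hand side is the ordinary Euclidean norm on the coefficient vector $\bbeta_m\br{t} \in \R^{b_{m+1}}$. The identity $\pU_{m+1}^\ast\pU_{m+1} = I$ is precisely the statement that $U_{m+1}$ is $\ast$-orthonormal, restricted to its last block of columns, and this is exactly what makes the isometry work. I do not anticipate a genuine obstacle here; the result is essentially an unfolding of the orthonormality of the basis, and the main thing to get right is to avoid conflating $\|\cdot\|_\ast$ with $\|\cdot\|$ when transitioning from vectors in $\V_{m+1}$ to their coordinate representations.
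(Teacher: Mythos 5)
Your proof is correct and follows essentially the same route as the paper: expand $\|\vr_m(t)\|_\ast^2$ using the representation $\vr_m(t)=\pU_{m+1}\bbeta_m(t)$ from \Cref{thm:residual} and collapse via the $\ast$-orthonormality $\pU_{m+1}^\ast\pU_{m+1}=I$. Your extra remarks on tracking which inner product is in play are a sound elaboration of the same one-line computation.
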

\begin{proof}
Using the result of \Cref{thm:residual}, the assertion follows from the straightforward computation
\begin{align*}
\|\vr_m(t)\|_\ast^2 &= \langle \vr_m(t), \vr_m(t)\rangle_\ast = \left\langle \pU_{m+1}\bbeta_m(t), \pU_{m+1}\bbeta_m(t) \right\rangle_\ast \\
&= \langle\bbeta_m(t), \bbeta_m(t) \rangle = \|\bbeta_m(t)\|^2.
\end{align*}
\end{proof}

We now first discuss which inhomogeneities $\vw(t)$ are naturally covered by our framework before providing several examples for its application in~\Cref{sec:examples}.

\subsection{Non-constant inhomogeneities}\label{sec:limitations of the framework}
For arbitrary vector-valued in\-homo\-gen\-ei\-ties $\vw(t)$, the requirement that $\vw(t) \in \V_m$ for all $t$ in \Cref{thm:residual} might be very restrictive and may not be satisfied by typical approximation spaces. We are, however, primarily interested in situations in which the solution of the IVP can be expressed in terms of certain matrix functions in order to then apply (rational) Krylov methods. In these situations, we typically have, for a moderate value of $q$,
\begin{equation}\label{eq:inhom linear combination}
\vw(t) = \sum_{i=1}^q g_i(t)\vw_i,
\end{equation}
with $g_i:(0,T] \rightarrow \R, i = 1,\dots,q$, i.e., $\vw(t)$ is a linear combination (with time-dependent coefficients) of only a few constant vectors. In this case, the solution to the IVP can often be written in the form
\begin{equation}\label{eq:y_bw}
\vy(t) = \sum_{i=0}^{p-1}f^{\text{IC}}_i(t,A)\vb_i + \sum_{j=1}^qf^{\text{S}}_{j}(t,A)\vw_j,
\end{equation}
with time-dependent matrix functions $f^{\text{IC}}_i,f^{\text{S}}_j:[0,T]\times\Rsq{n}\rightarrow\Rsq{n}, i = 0,\dots,p-1, j=1,\dots,q$.

The arguably most important special case of inhomogeneities of the form~\eqref{eq:inhom linear combination} arises in initial value problems defining the so-called $\varphi$-functions
\[
\varphi_j(z) = \sum_{i=0}^\infty \frac{z^i}{(i+j)!}
\]
that form the basis of most exponential integration schemes~\cite{HochbruckOstermann2010,NiesenWritght2012}: The IVP
\begin{equation}\label{eq:phi_ivp}
    \begin{cases}
        \begin{aligned}
            \vy^\prime(t) &= -A\vy(t) + \sum_{j=1}^q\frac{t^{j-1}}{(j-1)!}\vw_j \text{ on }(0,T], \\
            \vy(0) &= \vb_0,
        \end{aligned}
    \end{cases}
\end{equation}
is solved by
\begin{equation}\label{eq:general exponential integrator}
    \vy(t) = \exp(-tA)\vb_0 + t\varphi_1(-tA)\vw_1+t^2\varphi_2(-tA)\vw_2 + \cdots + t^q\varphi_q(-tA)\vw_q.
\end{equation}

In principle,~\eqref{eq:y_bw} could, e.g., be approximated using a block Krylov method with starting block $\pB = [\vb_0,...,\vb_{p-1},\vw_1,...,\vw_q]$ (cf.~\Cref{example:block_krylov}). %Notable examples of such IVPs that are relevant in practice are homogeneous initial value problems, the underlying IVPs of the $\varphi$-functions $\varphi_j,j\in\N_0$, which include the exponential function, where $D = \frac{d}{dt}$ and $\vw(t) = \frac{t^{j-1}}{(j-1)!}\vw_j$ as well as port-Hamiltonian systems, where $\vw(t)$ is the collection of inputs.
Often, however, it is beneficial to first rewrite the IVP  by splitting $\vy(t) = \widetilde{\vy}(t) + \va(t)$ such that $\widetilde{\vy}^{(j)} = \vnull, j = 0,\dots,p-1$, leading to
\[
\begin{cases}
    \begin{aligned}
        \widetilde{\vy}^{(p)}(t) &= -A\widetilde{\vy}(t) + \vw(t) - A\va(t) - \va^{(p)}(t) \text{ on }(0,T], \\
        \widetilde{\vy}(0) &= \widetilde{\vy}^\prime(0) = \dots = \widetilde{\vy}^{(p-1)}(0) = \vnull.
    \end{aligned}
\end{cases}
\]

E.g., for~\eqref{eq:phi_ivp}, we can set $\va(t) := \sum_{j=0}^{q-1}\frac{t^j}{j!}\widetilde{\vw}_j$ with the recursively defined vectors
\[
\widetilde{\vw}_0 = \vb_0, \qquad \widetilde{\vw}_j = -A\widetilde{\vw}_{j-1} + \vw_j, \ j = 1,\dots,q;
\]
cf.~\cite{NiesenWritght2012}, which leads to the IVP
\begin{equation}\label{eq:phi_ivp_transformed}
\begin{cases}
    \begin{aligned}
        \widetilde{\vy}^\prime(t) &= -A\widetilde{\vy}(t) + \frac{t^{q-1}}{(q-1)!}\widetilde{\vw}_q\text{ on }(0,T], \\ \widetilde{\vy}(0) &= \vnull,
    \end{aligned}
\end{cases}
\end{equation}
The solution of~\eqref{eq:phi_ivp_transformed} is given by $\widetilde{\vy}(t) = t^q\varphi_q(-tA)\widetilde{\vw}_q$ and can thus be approximated in the Krylov space $\kry{m}{A,\widetilde{\vw}_q}$.

\subsection{Examples} \label{sec:examples}
\Cref{thm:residual} and \Cref{cor:resnorm} can be applied under rather general conditions and allow to reproduce several known results as well as to obtain generalizations to many other important situations, as we illustrate in the following examples.

\begin{example}\label{example:exp}
Consider the standard polynomial Arnoldi method for the first-order IVP
\[
\begin{cases}
\begin{aligned}
\vy'(t) &= -A\vy(t) \text{ on } (0,T],\\
\vy(0) &= \vb_0.
\end{aligned}
\end{cases}
\]
that we considered in \Cref{sec:residual}. Here, $\V_m = \spK_m(A,\vb_0)$, the nested orthonormal basis is $U_{m+1} = [\vu_1,\dots,\vu_{m+1}]$, and the Euclidean inner product is used, i.e., $\langle\cdot,\cdot\rangle_\ast=\langle\cdot,\cdot\rangle$. In this case, the projected IVP arising from the Galerkin condition~\eqref{eq:galerkin_residual} is~\eqref{eq:simple ivp proj} and  $\vy_m(t) = U_m\exp(-tG_m)\ve_1\|\vb_0\|$ is exactly the standard Arnoldi approximation for the exponential. Due to $\vu_{m+1}^\ast A\vu_{m} = g_{m+1,m}$, \Cref{thm:residual} and \Cref{cor:resnorm}  directly yield the well-known result~\eqref{eq:residual norm}.
\end{example}

\begin{example}\label{example:exp_Q_ip}
Our framework allows us to effortlessly obtain the exact same results as in \Cref{example:exp} also when a non-standard inner product is used. In certain applications, switching to a different inner product allows to measure the error in a more natural way and can additionally have algorithmic advantages. As an example, consider the Poisson system
\begin{equation}\label{eq:poisson_system}
\begin{cases}
\begin{aligned}
\vy^\prime(t) &= JQ\vy(t) \text{ on }(0,T],\\
\vy(0) &= \vb_0.
\end{aligned}
\end{cases}
\end{equation}
with quadratic Hamiltonian $\mathcal{H}(\vy) = \frac12 \vy\tp Q\vy$, where $Q$ is symmetric positive definite and the structure matrix $J$ is skew-symmetric, i.e., $J\tp = -J$. For such a problem, it is natural to measure the error (or residual) norm using the inner product $\langle \vv, \vw \rangle_Q = \langle Q\vv, \vw \rangle$ induced by $Q$. In addition, the so-called $Q$-Arnoldi method (i.e., the Arnoldi method using $\langle\cdot,\cdot\rangle_Q$ instead of the Euclidean inner product) has two further important advantages; see, e.g.,~\cite{maier2025energy}. First, the system matrix $JQ$ is skew-self-adjoint with respect to the $Q$-inner product so that the Krylov basis vectors fulfill a three-term recurrence akin to the Lanczos process. Second, the corresponding Krylov approximations
\begin{equation}\label{eq:poisson_system_approx}
\vy_m(t) := U_m\exp(tG_m)\ve_1\|\vb_0\|_Q
\end{equation}
are energy-preserving---i.e., $\mathcal{H}(\vy_m(t)) = \mathcal{H}(\vy_m(0))$ for all $t > 0$---as it is also the case for the exact solution of the Poisson system~\eqref{eq:poisson_system}. Analogously to \Cref{example:exp}, we obtain 
\[
\|\vr_m(t)\|_Q = |g_{m+1,m}|\|\vb_0\|_Q|[\exp(tG_m)]_{m,1}|,
\]
where $G_m$ is the tridiagonal matrix arising from the $Q$-Arnoldi process. Such a relation is certainly of value: In~\cite[Section~3.3]{maier2025energy}, the authors mention the lack of a natural concept for monitoring the progress of the method as a main reason for not using~\eqref{eq:poisson_system_approx} in practice and resorting to an approach based on solving linear systems instead.
\end{example}

\begin{example}\label{example:block_krylov}
The solution of the second-order IVP
\begin{equation*}\label{eq:ivp_2nd_order}
\begin{cases}
\begin{aligned}
\vy^{\prime\prime}(t) &= -A\vy(t) + \vw  \text{ on } (0,T],\\
\vy(0) &= \vnull, \ \ \vy^\prime(0) = \vb_1
\end{aligned}
\end{cases}
\end{equation*}
is given by
\begin{equation}\label{eq:solution_2nd_order}
    \vy(t) = A^{-1}\left(I-\cos\big(t\sqrt{A}\big)\right)\vw + \big(\sqrt{A}\big)^{-1}\sin\big(t\sqrt{A}\big)\vb_1.
\end{equation}
The approach proposed in~\cite{botchev2013block} for approximating~\eqref{eq:solution_2nd_order} is to use the block Krylov subspace
\begin{equation*}\label{eq:block_krylovspace}
\spK_m^\Box(A,\pB) = \text{colspan}\left(\pB,A\pB,A^2\pB,\dots,A^{m-1}\pB\right).
\end{equation*}
with $\pB = [\vw, \vb_1]$. Just as for the standard Arnoldi method, we denote the orthonormal basis of $\spK_m^\Box(A,\pB)$ computed by the block Arnoldi method by $U_m \in \mathbb{R}^{n \times 2m}$ and the compression of $A$ onto the block Krylov space as $G_m = U_m\tp AU_m \in \mathbb{R}^{2m \times 2m}$ and impose a Galerkin condition $\vr_m(t) \perp \spK_m^\Box(A,\pB)$ for the residual $\vr_m(t) = -\vy_m^{\prime\prime}(t) - A\vy_m(t) + \vw$. The projected IVP~\eqref{eq:projected_ode} resulting from this condition reads
\[
\begin{cases}
\begin{aligned}
\vx_m^{\prime\prime}(t) &= -G_m\vx_m(t) + U_m\tp\vw \text{ on } (0,T],\\
\vx_m(0) &= \vnull,\ \ \vx_m^{\prime} = U_m\tp\vb_1,
\end{aligned}
\end{cases}
\]
and is solved by
\begin{equation*}\label{eq:approx_2nd_order_x}
\vx_m(t) = \big(G_m\big)^{-1}\left(I-\cos\left(t\sqrt{G_m}\right)\right)U_m\tp\vw + \left(\sqrt{G_m}\right)^{-1}\sin\left(t\sqrt{G_m}\right)U_m\tp\vb_1.
\end{equation*}
The overall block Arnoldi approximation is then given by $\vy_m(t) = U_m\vx_m(t)$.

We can now apply our framework, because the block Krylov subspaces satisfy the nestedness property $A\spK_m^\Box(A,\pB) \subseteq \spK_{m+1}^\Box(A,\pB)$. This yields
\[
\vr_m(t) = -\pU_{m+1}\pg_{m+1,m}[\vx_m(t)]_{2m-1:2m},
\]
where $\pg_{m+1,m} \in \R^{2 \times 2}$ denotes the bottom right block of the block upper Hessenberg matrix arising from the block Arnoldi process. This essentially reproduces the first part of~\cite[Theorem~3]{botchev2013block}.
\end{example}

\begin{example}\label{example:rational_krylov}
In situations where shifted linear systems with $A$ can be efficiently solved, it is often beneficial to work with rational Krylov spaces instead of standard (polynomial) Krylov spaces due to the better approximation properties of rational functions compared to polynomials; see, e.g.,~\cite{Guettel2010,Guettel2013,GuettelKnizhnerman2013}. A rational Krylov subspace of dimension $m$ with respect to $A$ and $\vb$ is defined as
\begin{equation*}\label{eq:rational_krylov_space}
    \spQ_m(A,\vb,q_{m-1}) := q_{m-1}(A)^{-1}\spK_m(A,\vb),
\end{equation*}
with a denominator polynomial $q_{m-1}(z) = \prod_{i=1}^{m-1} (z-z_i)$ with zeros $z_i \notin \sigma(A)$, the so-called \emph{poles} of the rational Krylov space.

Even for nested pole sequences, i.e., $q_m(z) = (z-z_m)q_{m-1}(z)$, we generally have $A\spQ_m(A,\vb,q_{m-1}) \not\subset \spQ_{m+1}(A,\vb,q_m)$, so that \Cref{thm:residual} cannot directly be applied. However, if the pole $z_m$ is chosen ``at infinity'', i.e., $q_{m-1}(z) = q_{m}(z)$, the inclusion $A\spQ_{m}(A,\vb,q_{m-1}) \subset \spQ_{m+1}(A,\vb,q_{m-1}) = \spQ_{m+1}(A,\vb,q_m)$ \emph{does} hold. 

This fits quite naturally into the rational Krylov framework. Instead of the Arnoldi relation~\eqref{eq:arnoldi relation}, rational Krylov methods yield a relation
\begin{equation}\label{eq:rational_arnoldi_relation}
AU_{m+1}\uK_m = U_{m+1}\uG_m
\end{equation}
with unreduced upper Hessenberg matrices $\uG_m,\uK_m$ of size $(m+1) \times m$. If $z_m = \infty$, the last row of $\uK_m$ vanishes and the relation~\eqref{eq:rational_arnoldi_relation} simplifies to
\[
AU_mK_m = U_{m+1}\uG_m,
\]
where $K_m$ denotes the upper $m \times m$ part of $\uK_m$. In this case, the projection of $A$ onto the rational Krylov space can be computed as
\[
U_m\tp A U_m = G_mK_m^{-1}
\]
working only with matrices of size $m \times m$; see, e.g.,~\cite[Section~3.1]{Guettel2013}. Therefore, in order to efficiently evaluate the rational Arnoldi approximation to $f(A)\vb$, it is common practice in rational Krylov methods to first do a ``polynomial step'', then form the rational Arnoldi approximation
\[
\vf_m^{\text{rat}} := U_mf(G_mK_m^{-1})\ve_1\|\vb\|
\]
and only after that ``complete'' the rational Arnoldi step by adding a new pole $z_m \neq \infty$ (if the iteration needs to be continued). 

Exactly the same approach can be used for computing the rational Krylov residual whenever~\eqref{eq:rational_arnoldi_approx} is used in the context of solving an ODE. For ease of exposition, we describe the approach for the simplest case, i.e., for solving IVP~\eqref{eq:simple ivp}, noting that it is also straightforwardly applicable for all other IVPs. The corresponding rational Arnoldi approximation reads
\begin{equation}\label{eq:rational_arnoldi_approx}
\vy_m^{\text{rat}}(t) = U_m\exp(-tG_mK_m^{-1})\ve_1\|\vb_0\|.
\end{equation}

It is straightforward to verify that the rational Arnoldi approximation~\eqref{eq:rational_arnoldi_approx} is characterized by the Galerkin condition
\[
\vr_m^{\text{rat}}(t) \perp \spQ_m(A,\vb,q_{m-1})
\]
for the residual $\vr_m^{\text{rat}}(t) := -(\vy_m^{\text{rat}})^{\prime}(t) - A\vy_m^{\text{rat}}(t)$. Thus, from \Cref{thm:residual} and \Cref{cor:resnorm}, we obtain the relation
\[
\vr^{\text{rat}}_m(t) = \beta_m(t)\vu_{m+1} \text{ with } \beta_m(t) = -\vh_{m+1,\bullet}\exp(-tG_mK_m^{-1})\ve_1\|\vb_0\|,
\]
where $\vh_{m+1,\bullet}$ is the last row of the matrix 
\[
\uH_m := \uG_m K_m^{-1}.
\]
We note that further simplifications are not possible because the assumptions for the second assertion of \Cref{thm:residual} are not fulfilled in general for rational Krylov spaces, unless two consecutive poles are chosen at infinity.
\end{example}

\subsection{Bounding the error in terms of the residual}
Similar to what we discussed in \Cref{sec:residual}, the residual norm can be used to obtain a bound for the \emph{error} of the Krylov ODE solution. We note that the Krylov error $\vxi_m(t) = \vy(t) - \vy_m(t)$ satisfies
\begin{align*}
    \vxi_m^{(p)}(t) &= \vy^{(p)}(t) - \vy_m^{(p)}(t) \\
    &= -A\vy(t) + \vw(t) - \vy_m^{(p)}(t) \\
    &= -A\vy(t) + \vw(t) - \vy_m^{(p)}(t) + A\vy_m(t) - A\vy_m(t) \\
    &= -A(\vy(t) - \vy_m(t)) + \left(-\vy_m^{(p)}(t) - A\vy_m(t) + \vw(t)\right) \\
    &= -A\vxi_m(t) + \rmt.
\end{align*}
Therefore, $\vxi_m(t)$ can be characterized as the solution of the IVP
\begin{equation}\label{eq:error ivp}
\begin{cases}
    \begin{aligned}
        \vxi_m^{(p)}(t) &= -A\vxi_m(t) + \vr_m(t) \text{ on }(0,T], \\
        \vxi_m(0) &= \vxi_m^\prime(0) = \cdots = \vxi_m^{(p-1)}(0) = \vnull.
    \end{aligned}
\end{cases}
\end{equation}
The solution of~\eqref{eq:error ivp} can be written in terms of a (generalized) variation-of-constants formula
\begin{equation}\label{eq:generalized_variation_of_constants}
\vxi_m(t) = \int_0^tK_p\br{t-\theta,A}\vr_m\br{\theta} \d\theta,
\end{equation}
where the kernel function $K_p(\tau,A)$ is the inverse Laplace transform (w.r.t.\ the variable~$s$) of the function $F(s, A) = (s^p I+A)^{-1}$; see, e.g.,~\cite[Chapter~3]{ArendtBattyHieberNeubrander2011}. It follows from~\cite[Equation~(59)]{van2020mittag} that 
\[
K_p(\tau,A) = \tau^{p-1}E_{p,p}(-\tau^p A),
\]
where $E_{\alpha,\beta}(z) = \sum\limits_{j=0}^\infty \frac{z^j}{\Gamma(\alpha j+\beta)}$ is the two-parameter Mittag--Leffler function and $\Gamma(z)$ is the Euler gamma function.

Equation~\eqref{eq:generalized_variation_of_constants} directly implies the error bound
\begin{align}\label{eq:general error bound}
\|\vxi_m(t)\|_\ast &\leq \int_0^t \|K_p\br{t-\theta,A}\|_\ast \|\vr_m\br{\theta}\|_\ast \d\theta \nonumber\\
&\leq \thetamax \|\vr_m\br{\theta}\|_\ast \int_0^t \|K_p\br{t-\theta,A}\|_\ast \d\theta.
\end{align}
More concrete error bounds can be obtained for specific combinations of $K_p$ and $\|\cdot\|_\ast$ by inserting suitable upper bounds for $K_p(\tau, A)$. In the following example, we focus on the two most important special cases, $p = 1$ and $p = 2$.

\begin{example}\label{ex:error_bound_euclidean}
% \begin{equation*}
% K(\tau,A) = \begin{cases}
% \exp(-\tau A) & \text{if } D = \frac{\d}{\d t}, \\
% \big(\sqrt{A}\big)^{-1}\sin\big(\tau \sqrt{A}\big) & \text{if } D = \frac{\d^2}{\d t^2}.
% \end{cases}
% \end{equation*}
In the first order case we have $K_1(\tau,A) = \exp(-\tau A)$ and for the Euclidean norm, one can again bound $\|K_1(\tau,A)\| \leq C_1 e^{-\tau\omega_1}$ for $\tau \in [0,t]$ to obtain
\begin{equation}\label{eq:bound_error_phi2}
\|\vxi_m(t)\| \leq  \thetamax \|\vr_m\br{\theta}\| \cdot C_1e^{-t\omega_1}\int_0^t e^{-\theta\omega_1} \d\theta = C_1 t\varphi_1(-t\omega_1)\thetamax \|\vr_m\br{\theta}\|,
\end{equation}
exactly reproducing~\eqref{eq:bound_error_phi}. An illustration of this error bound is given in \Cref{fig:errors bounds ress} below. %We want to note that in most cases, the residual itself is an upper bound for the error until the error stagnates due to round-off errors. 

% \begin{figure}
%     \centering
%     \input{fig/fom_err_vs_res.tikz}
%     \caption{FOM error, residual and error bound for the convection diffusion problem from \Cref{subsec:conv-diff} for $N=100$. As the spectrum of $A$ goes into the left half plane, we have $\omega_1 \approx -0.5759$ leading to $\varphi(-t\omega_1) \approx 1.3522$.}
%     \label{fig:fom err res}
% \end{figure}

In the second order case we have $K_2(\tau,A) = \big(\sqrt{A}\big)^{-1}\sin\big(\tau \sqrt{A}\big)$ and a natural assumption is that $A$ generates an \emph{exponentially bounded cosine family} (see, e.g.,~\cite{fattorini2011second}), i.e.,
\[
\|\cos\big(\tau\sqrt{A}\big)\| \leq C_2 e^{\tau \omega_2},
\]
which implies the bound
\[
\|\big(\sqrt{A}\big)^{-1}\sin\big(\tau \sqrt{A}\big)\| \leq C_2\tau\varphi_1(\tau\omega_2),
\]
so that~\eqref{eq:general error bound} yields
\[
\|\vxi_m(t)\| \leq  \thetamax \|\vr_m\br{\theta}\| \cdot C_2\int_0^t (t-\theta)\varphi_1((t-\theta)\omega_2) \d\theta = C_2t^2\varphi_2(\omega_2 t)\thetamax \|\vr_m\br{\theta}\| 
\]
with the second phi function $\varphi_2(z) = \frac{e^z-1-z}{z^2}$. Specific choices for $C_2$ and $\omega_2$ again depend on properties of $A$. E.g., when $A$ is Hermitian positive definite, one can take $C_2 = 1$ and $\omega_2 = 0$. For non-Hermitian $A$, one can either take $C_2 = 1, \omega_2 = \sqrt{\|A\|}$ or $C_2 = \kappa(X), \omega_2 = \max_{\lambda \in \sigma(A)} \Im\big(\sqrt{\lambda}\big)$, where $\kappa(X)$ is the condition number of an eigenvector matrix of $A$.
\end{example}

\begin{example}\label{example:exp_Q_ip_error}
We revisit the Poisson system~\eqref{eq:poisson_system} considered in \Cref{example:exp_Q_ip}. Recall that its solution is given by $\exp(tJQ)\vb_0$, where $Q$ is symmetric positive definite and $J$ is skew-symmetric. Thus, $\exp(tJQ)$ is unitary with respect to the $Q$-inner product $\langle\cdot,\cdot\rangle_Q$, which immediately yields $\|\exp(tJQ)\|_Q = 1$ for all $t \geq 0$. We thus directly find the error bound
\[
\|\vxi_m(t)\|_Q \leq  t\cdot \thetamax \|\vr_m\br{\theta}\|_Q,
\]
for the approximation produced by the $Q$-Arnoldi method.
\end{example}

\section{Residual-based sketched Krylov methods for ODEs}\label{sec:sketching_ode}
In this section, we discuss in more detail how so-called \emph{sketched Krylov methods}, in particular the sketched Arnoldi (or sFOM) method introduced in~\cite{GuettelSchweitzer2023}, fit into the framework of \Cref{sec:framework} and how this enables us to formulate a reliable stopping criterion for the sketched Arnoldi method. We start by brief\/ly recapitulating some important facts about randomized subspace embeddings, which lie at the heart of sketched Krylov methods.

\subsection{Randomized sketching}\label{sec:sketching}
Sketched Krylov methods (and more generally all methods employing the sketching paradigm) are based on \emph{subspace embeddings}~\cite{DrineasMahoneyMuthukrishnan2006,MartinssonTropp2020,sarlos2006improved,woodruff2014sketching}, which embed a subspace $\V$ of $\R^n$ into a smaller Euclidean space $\R^s$, $s \ll n$ in a way that distorts norms and inner products in a controlled manner. 

For a given subspace $\V$ and distortion parameter $\varepsilon\in [0,1)$, we call $S \in \R^{s \times n}$ an \emph{$\varepsilon$-subspace embedding for $\V$}, if for all $\vv \in \V$,
\begin{equation}\label{eq:sketch}
(1-\varepsilon) \| \vv \|^2 \leq \| S \vv\|^2 \leq (1+\varepsilon) \|\vv\|^2,
\end{equation}
or equivalently, for all $\vu,\vv \in \V$,
\begin{equation}{\label{eq:sketch_innerproduct}}
\langle \vu, \vv \rangle - \varepsilon \| \vu \| \| \vv \|
            \leq \langle S \vu, S \vv \rangle 
            \leq \langle \vu, \vv \rangle + \varepsilon \| \vu \| \| \vv \|.
\end{equation}
Often, we simply use the established short-hand name \emph{``sketching matrix''} for a subspace embedding $S$. 

In practice, one typically does not have complete knowledge of the subspace $\V$ that shall be embedded. E.g., in the context of the methods discussed in this paper, $\V = \spK_{m+1}^\Box(A,\pB)$ is a (block) Krylov subspace that has not yet been constructed at the time at which the sketching matrix needs to be fixed. 

Therefore, we work with so-called \emph{oblivious embeddings}: For a given subspace dimension $d$, distortion parameter $\varepsilon$ and failure probability $\delta$, we call $\mathcal{S} \subset \R^{s \times n}$ a family of $(\varepsilon,\delta,d)$-oblivious subspace embeddings if for any subspace $\V$ of dimension $d$, a randomly drawn $S \in \mathcal{S}$ satisfies property~\eqref{eq:sketch} and~\eqref{eq:sketch_innerproduct} with probability $1-\delta$.

There are many different ways to probabilistically construct families of oblivious subspace embeddings, and we refer the reader to \cite{woodruff2014sketching} or~\cite[Section~8]{MartinssonTropp2020} for details. In our experiments reported in \Cref{sec:experiments}, we always employ a so-called \emph{sparse sign matrix} as sketching operator, i.e., a matrix of the form
\[
S = \sqrt{\frac{n}{s}} \cdot [\vs_1,\dots,\vs_n] \in \R^{s \times n},
\]
where each column $\vs_i$ contains exactly $\zeta \in \N$ nonzero entries (at randomly chosen positions) which take the values $-1$ or $1$ with equal probability. Clearly, computing the sketch $S\vv$ for some vector $\vv\in\Rn$ is possible in $\mathcal{O}(\zeta n)$ flops. The analysis in~\cite{cohen2016nearly} shows that sparse sign matrices form a family of $(\varepsilon,\delta,d)$-oblivious subspace embeddings if the embedding dimension and sparsity parameter satisfy $s = \mathcal{O}(\varepsilon^{-2}d\log\frac{d}{\delta})$ and $\zeta = \mathcal{O}(\varepsilon^{-1}d \log\frac{d}{\delta})$, respectively. Empirically, even smaller sparsity parameters appear to work very well in practice. E.g., in~\cite[Section~3.3]{tropp2019streaming}, $\zeta = \min\{s,8\}$ is empirically found to be a very reliable choice, and it is mentioned that there is evidence that even smaller $\zeta$ is often feasible, in particular for large problems. In our experiments reported in \Cref{sec:experiments}, we go as low as $\zeta = 1$ without observing deterioration of the algorithms.

The quadratic dependence of the sketching dimension on the embedding accuracy, $s = \mathcal{O}(\varepsilon^{-2}d \log\frac{d}{\delta})$, means that one typically needs to accept rather crude embedding qualities $\varepsilon$ in order to arrive at a computationally feasible method. A typical choice in many algorithms is to aim for $\varepsilon = 1/\sqrt{2}$, so that $s \sim 2d\log d$.

A viewpoint on the sketching paradigm that is very important in our derivations is that a subspace embedding $S$ induces a (semidefinite) inner product
\begin{equation}\label{eq:semidef_inner_product}
    \langle \vu, \vv\rangle_S := \langle S\vu, S\vv\rangle
\end{equation}
and a corresponding semi-norm $\|\vv\|_S =  \sqrt{\langle\vv,\vv\rangle_S}$. It is easy to see that, due to the embedding property~\eqref{eq:sketch_innerproduct}, $\langle \cdot,\cdot\rangle_S$ is a proper (i.e., positive definite) inner product when restricted to the embedded subspace $\V$; see, e.g.,~\cite[Section~3.1]{BalabanovNouy2019}. Consequently, $\|\cdot\|_S$ is a norm on $\V$. In view of~\eqref{eq:sketch_innerproduct}, one can view $\langle \cdot,\cdot\rangle_S$ and $\|\cdot\|_S$ as slightly distorted versions of the Euclidean inner product and norm, respectively.

\subsection{Applying our framework to the sketched Arnoldi method}
We now apply our framework to a sketched Arnoldi method, in particular the implementation of this method employing ``basis whitening'' that is discussed and analyzed in~\cite{GuettelSchweitzer2023,palitta2025sketched}.
In a nutshell, the idea of this method is to cheaply construct a non-orthogonal basis of a Krylov subspace and then retrospectively and implicitly orthogonalize it with respect to the sketched inner product~\eqref{eq:semidef_inner_product}.

In the following, we denote by $V_{m+1} = [\pV_1,\dots,\pV_{m+1}]$ an arbitrary (typically non-orthogonal) nested basis of the (block) Krylov subspace $\spK_{m+1}^\Box(A,\pB)$, where $\pB \in \Rg{n}{\ell}$ and we assume that $\spK_{1}^\Box(A,\pB) = \colspan\{\pV_1\}$ contains all initial conditions, as well as $\vw(t)$ for all $t \in (0,T]$. In the sketched Arnoldi method, we typically obtain $V_{m+1}$ by a $k$-truncated Arnoldi process which also yields the (block-)$k$-banded upper Hessenberg matrix $\uH_m$. In the so-called basis whitening step, we compute a thin QR decomposition $SV_{m+1} = Q_{m+1}R_{m+1}$,
\[
Q_{m+1} = \begin{bmatrix}
    Q_m & \wQ_{m+1}
\end{bmatrix}, \ \ \ R_{m+1} = \begin{bmatrix}
    R_m & \pR_m \\
    0 & \pp_{m+1}
\end{bmatrix}.
\]
Then, $U_{m+1} := V_{m+1}R_{m+1}\inv$ is a basis of $\kry{m+1}{A,\pB}$ which is orthonormal with respect to $\sprod{\cdot}{\cdot}_S$. The \textit{sketched Arnoldi relation} from \cite[Proposition~2.1]{palitta2025mateq} involving these quantities reads
\begin{equation}\label{eq:sketched arnoldi decomposition}
    SAU_{m} = SU_m\br{\Hhat_m + \Chat_m \pE_m\tp} + S\pU_{m+1}\pp_{m+1}\ph_{m+1,m}\pp_m\inv \pE_m\tp,
\end{equation}
where $\Hhat_m = R_mH_mR_m\inv$, $\Chat_m = \pR_m\ph_{m+1,m}\pp_m\inv$, and $\pE_m \in \Rg{m\ell}{\ell}$ contains the last $\ell$ columns of the identity matrix $I_{m\ell}$.

% In terms of these quantities, the corresponding sketched Arnoldi (or sFOM) approximation for $f(A)\vb$ is given by
% \begin{equation}\label{eq:sfom}
% \vf_m = U_mf(\Hhat_m + \Chat_m \pE_m\tp)(SU_m)\tp S\vb.    
% \end{equation}
% Crucially, note that~\eqref{eq:sfom} can be evaluated without explicitly forming $U_m = V_mR_m^{-1}$, by first performing the triangular solve $\vc_m = R_m^{-1}\left(f(\Hhat_m + \Chat_m \pE_m\tp)(SU_m)\tp S\vb\right)$ and then setting $\vf_m = V_m\vc_m$.

To apply our framework in the $m$th step of the algorithm, we choose
\[
\underbrace{\spK_{m-1}^\Box(A,\pB)}_{=: \V_{m-1}} \subseteq \underbrace{\spK_{m}^\Box(A,\pB)}_{=: \V_m} \subseteq \underbrace{\spK_{m+1}^\Box(A,\pB)}_{=: \V_{m+1}} \subseteq \R^n,
\]
define $\ymt = U_m\xmt$ and demand $\rmt \perp_S \spK_{m}^\Box(A,\pB)$. As $U_{m-1},U_m,U_{m+1}$ are orthogonal with respect to $\sprod{\cdot}{\cdot}_S$ and $A\spK_{j}^\Box(A,\pB) \subseteq \spK_{j+1}^\Box(A,\pB)$ for all $j \in \N$, we have that $\vx_m(t)$ solves the initial value problem from~\eqref{eq:projected_ode} and that we can apply~\Cref{thm:residual} as well as~\Cref{cor:resnorm}. Using~\eqref{eq:sketched arnoldi decomposition} and noting that $M\her = (SM)\tp S$, we obtain
\begin{equation}\label{eq:sketched Hessenberg matrix}
U_m\her AU_m = (SU_m)\tp SAU_m = Q_m^\top SAV_mR_m\inv = \Hhat_m+\Chat_m\pE_m\tp,
\end{equation}
as well as
\begin{equation}\label{eq:resnorm2}
    \| \rhatmt \|_S = \| \betahatmt \|
\end{equation}
with
\begin{equation}\label{eq:resnorm1}
    \begin{split}
        \betahatmt &= -(S\pU_{m+1})\tp SA\pU_m[\xhatmt]_{m(\ell-1)+1:m\ell} \\
        &= -\pp_{m+1}\ph_{m+1,m}\pp_m\inv[\xhatmt]_{m(\ell-1)+1:m\ell}.
    \end{split}   
\end{equation}
Since $\rhatmt \in \spK_{m+1}^\Box(A,\pB)$, we have from the embedding property~\eqref{eq:sketch} that
\begin{equation*}
    \| \rhatmt \| \leq \frac{1}{\sqrt{1 - \varepsilon}} \| \rhatmt \|_S,
\end{equation*}
i.e., if the sketched residual norm---which we can easily monitor during the sketched Arnoldi method due to~\eqref{eq:resnorm2}--\eqref{eq:resnorm1}---is small, this guarantees that the actual residual norm is also small. Plugging~\eqref{eq:resnorm2} into the error bound~\eqref{eq:general error bound} directly yields
\begin{equation}\label{eq:sketched_err_norm}
    \errhatnormt \leq \frac{1}{\sqrt{1-\varepsilon}} \thetamax \|\vr_m\br{\theta}\|_S \int_0^t \|K_p\br{t-\theta,A}\| \d\theta,
\end{equation}
so that, up to a factor $\frac{1}{\sqrt{1-\varepsilon}}$, the same bounds as, e.g., in \Cref{ex:error_bound_euclidean} can be obtained. For example, in the first-order case and under the usual assumption~\eqref{eq:bound exp norm}, the bound~\eqref{eq:sketched_err_norm} turns into
\begin{equation}\label{eq:error_bound_sketched}
\errhatnormt \leq  \frac{C_1}{\sqrt{1-\varepsilon}} t\varphi_1(-t\omega_1)\thetamax \|\vr_m\br{\theta}\|_S,
\end{equation}
so that a small \emph{sketched residual norm} guarantees that the \emph{actual error norm} of the sketched Arnoldi approximation is small as well. One particular thing to note about the bound~\eqref{eq:error_bound_sketched} is that the constants $C_1$ and $\omega_1$ only depend on spectral properties of the matrix $A$, and not on properties of the sketch-projected matrix, whose spectral region is typically much larger than that of $A$. Instead, in our bound~\eqref{eq:error_bound_sketched}, the effect of sketching only enters mildly through the dependence on the embedding quality $\varepsilon$.

We illustrate the bound~\eqref{eq:error_bound_sketched} in \Cref{fig:errors bounds ress} for the model problem considered in \Cref{subsec:conv-diff}, together with the bound obtained for the standard Arnoldi method. We observe that both bounds predict the qualitative behavior of the error very well, although they overestimate the actual magnitude quite substantially---a well-known phenomenon for residual-based error bounds. Both methods and bounds behave almost identical in this experiment. We note for clarity that the bound~\eqref{eq:error_bound_sketched} does \emph{not} say anything about how the errors of the sketched and standard Arnoldi approximations compare, though. 

% \begin{figure}
%     \centering
%     \input{fig/sfom_err_vs_res.tikz}
%     \caption{FOM and sFOM error, residual and error bound for the convection diffusion problem from \Cref{subsec:conv-diff} for $N=100$. As the spectrum of $A$ goes into the left half plane, we have $\omega_1 \approx -0.5759$ leading to $\varphi(-t\omega_1) \approx 1.3522$.}
%     \label{fig:placeholder}
% \end{figure}

\begin{figure}
\centering
    \pgfplotsset{height=0.35\linewidth,
            width=0.5\linewidth,
            compat=1.17,
            every axis/.append style={
                        legend style={/tikz/every even column/.append style={column sep=6pt}}, font=\footnotesize}}

\noindent%
\begin{tikzpicture}[scale=1]%
\begin{groupplot}[
    group style={group size=2 by 2,horizontal sep=10pt,vertical sep=10pt},
    legend style={at={(.0 5,.05)}, anchor=south west},
    legend columns=1,
    xmin=0,
    xmax=263,
    grid=major,
    xlabel={\small $m$},
    ymin = 1e-16,
    ymax = 1e4,
    tick label style={font=\small}]
\nextgroupplot[ymode=log,xlabel=\empty]
    \addplot[color=Black,very thick,dashed]
    	    table [x ={m},y ={err_fom}] {data/compare_err_res.dat};
            \addlegendentry{FOM error}
    \addplot[color=Red,very thick]
	    table [x ={m},y ={bound_fom}] {data/compare_err_res.dat};
        \addlegendentry{Bound~\eqref{eq:bound_error_phi2}}
        
\nextgroupplot[ymode=log,yticklabel=\empty,xlabel=\empty]
    \addplot[color=Black,very thick,dashed]
	    table [x ={m},y ={err_sfom}] {data/compare_err_res.dat};
        \addlegendentry{sFOM error}

    \addplot[color=Red,very thick]
	    table [x ={m},y ={bound_sfom}] {data/compare_err_res.dat};
        \addlegendentry{Bound~\eqref{eq:error_bound_sketched}}
        
\end{groupplot}
\end{tikzpicture}
    \caption{FOM and sFOM error and error bound for the convection diffusion problem from \Cref{subsec:conv-diff} for $N=100$. As the spectrum of $A$ goes into the left half plane, we have $\omega_1 \approx -0.5759$ leading to $\varphi(-t\omega_1) \approx 1.3522$. In sFOM, we orthogonalize each new basis vector against the previous $k=2$ vectors and use a sketching dimension of $s = 1000$. As no analytical expression for $\varepsilon$ is available for the embedding we use, we estimate it empirically, giving $\varepsilon\approx 0.1351$.}
    \label{fig:errors bounds ress}
\end{figure}

\subsection{Efficient implementation of the residual-based sketched Arnoldi method} \label{sec:implementation}
We formulate the sketched Arnoldi method in \Cref{alg:residual sfom}. We denote by $\eta_m := \max_{0\leq\theta\leq t} \|\vr_m(\theta)\|_S$ the maximum of the sketched residual norm over the integration interval.

\begin{algorithm}[ht]
    \caption{Residual-based sketched Arnoldi}\label{alg:residual sfom}
    \begin{algorithmic}[1]
        \Input A, $\pB$, $t$, $\mathtt{tol}$, $m_{\max}$
        \Output Approximate solution $\vy_m(t)$
        \Statex\vspace{-0.6\baselineskip}
        \State Compute QR decomposition $Q_{\pB}R_{\pB} = \pB$ and set $V_1 \gets \pB R_\pB\inv$ \label{line:normalizing B}
        \State Perform truncated Arnoldi step $V_1 \mapsto (V_2,\uH_1)$
        \State Compute sketch $S\pV_1$ and QR decomposition $Q_1R_1 = S\pV_1$ and set $\Hhat_1 \gets H_1$
        \For{$m = 1,...,m_{\max}$}
            \State Compute sketch $S\pV_{m+1}$
            \State Update QR decomposition $(Q_mR_m = SV_m) \mapsto (Q_{m+1}R_{m+1} = SV_{m+1})$ \label{line:qr update}
            \If{\texttt{update residual}}
                \State Solve sketch-projected problem \eqref{eq:projected_ode} using \eqref{eq:sketched Hessenberg matrix} to obtain $\xhatmt$
                \State Set $\eta_m \gets \max_{0\leq\theta\leq t} \|\pp_{m+1}\ph_{m+1,m}\pp_m\inv[\vx_m(\theta)]_{m(\ell-1)+1:m\ell}\|$ \label{line:resnorm}
                \If{$\eta_m < \mathtt{tol}$}
                    \State \Return $\vy_m(t) := V_m(R_m\inv\xmt)$
                \EndIf
            \EndIf
            \State Perform truncated Arnoldi step $(V_{m+1},\uH_m) \mapsto (V_{m+2},\uH_{m+1})$
            \State Update $\Hhat_m \mapsto \Hhat_{m+1}$ \label{line:hhat update}
        \EndFor
        \State \Return $\vy_m(t) := V_m(R_m\inv\xmt)$
    \end{algorithmic}
\end{algorithm}

Line \ref{line:normalizing B} corresponds to the ``block normalization'' of $\pB$. If $\pB=\vb$ is a vector, we have $R_\vb = \|\vb\|$ and thus $V_1 = \vb/\|\vb\|$. The update of the QR decomposition in line \ref{line:qr update} of \Cref{alg:residual sfom} can be done by any algorithm that iteratively constructs the QR decomposition column by column. We use modified Gram-Schmidt with reorthogonalization, as it allows us work with a thin QR decomposition. Since the matrix $Q_m$ is not needed for any computation in \Cref{alg:residual sfom}, one might even use a Q-less QR update.

In line \ref{line:resnorm} of \Cref{alg:residual sfom}, $\eta_m$, the maximum of the sketched residual norm, is approximated by computing $\|\vr_m(\theta_i)\|_S$ over an equidistant grid $0 < \theta_1 = t/n_t < ... < \theta_{n_t} = t$, similarly to what is done in~\cite{bokn20,bks23,bkt21}. In all experiments reported in \Cref{sec:experiments}, the residual norm is evaluated at $n_t = 5$ points.

The update of the matrix $\Hhat_m$ in line \ref{line:hhat update} can be done at a cost of $\OO{m^2}$, as described in~\cite[Section 5]{palitta2025mateq}.

\subsection{RT-Restarting for improved stability and efficiency}\label{sec:restarting}
In our own experiments, as well as several experiments reported in the literature (see, e.g.,~\cite{cortinovis2024speeding}), we have observed that for some very hard problems that require many Krylov iterations, the sketched Arnoldi method can become unstable and stagnate at an unsatisfactory accuracy (or even diverge). 

A possible remedy is to \emph{restart} the Arnoldi method once stagnation or divergence is observed (e.g., by monitoring whether the residual norm stops decreasing or starts increasing). In the context of solving ODEs, one can do so via sub time stepping, where the length of the time steps is adaptively determined based on the residual norm. This approach is also known as residual-time (RT) restarting; see~\cite{bokn20,bks23,bkt21}. Based on our formulas for the sketched residual norm, this approach, which we brief\/ly recapitulate in the following, can straightforwardly be generalized to the sketched Arnoldi method.

The crucial observation for this approach is that after an arbitrary number $m$ of Krylov steps and for any tolerance $\mathtt{tol} > 0$, there always exists a time point $\tau > 0$ such that
\[
\max_{0 \leq \theta \leq \tau} \|\vr_m(\theta)\|_S < \mathtt{tol}.
\]
After $m$ steps, if the need for a restart is detected, one can thus start a new sketched Arnoldi iteration for the modified IVP
\begin{equation}\label{eq:rt_restarting_new_ivp}
\begin{cases}
\begin{aligned}
    \widetilde{\vy}^{(p)}(t) &= -A\widetilde{\vy}(t) + \vw(t+\tau) \text{ on }(0,T-\tau], \\
    \widetilde{\vy}^{(j)}(0) &= \vy_m^{(j)}(\tau), \ j = 0,\dots,p-1.
\end{aligned}
\end{cases}
\end{equation}
Clearly, the solution of~\eqref{eq:rt_restarting_new_ivp} is such that $\widetilde{\vy}(T-\tau) = \vy(T)$. Due to the shorter integration interval, problem~\eqref{eq:rt_restarting_new_ivp} is ``easier'' to solve than the original IVP, and each subsequent restart cycle reduces the time interval further.

Our experiments suggest that the sketched Arnoldi method is, in fact, very ``forgiving'' regarding the stabilizing effect of restarts, in the sense that no ``pre-emptive'' restarting is necessary. Even when restarting \emph{after} an instability has been detected, in all our experiments, it was still possible to continue iterating to the desired accuracy after restarting.

In our implementation, we therefore simply monitor the slope at which the residual norm decreases by computing $\slope_m = (\eta_m-\eta_{m-\update})/\update$, where $\update$ is a parameter that determines after how many iterations we compute the residual norm, and $\eta_0 := 0$. Often, the residual norm initially stagnates or \emph{increases} in the first few Krylov iterations before steadily decreasing afterwards. For detecting the need to restart, we therefore first wait until $\slope_m < 0$ for the first time. From then on, we continuously monitor whether $\slope_m$ exceeds $\rtol\cdot\eta_{m-\update}$ with some tolerance $\rtol$, indicating the need for a restart.

For hard problems that require a large number of Krylov iterations in order to reach the desired accuracy, it might also be attractive to restart the sketched Arnoldi method from time to time in order to keep the computational complexity low. While the cost of the sketched Arnoldi method scales much more favorably with respect to the number of iterations than the cost of the plain Arnoldi method, it still \emph{has} an impact. In particular, evaluating a function at the compressed matrix $\Hhat_m + \Chat_m \pE_m\tp$ of size $m\ell$ typically has computational cost $\OO{m^3\ell^3}$. Additionally, a larger number of iterations requires a larger sketching dimension $s$ to guarantee~\eqref{eq:sketch} with high enough probability. We therefore also provide a maximum number $m_{\text{restart}}$ of iterations, after which a restart is performed even when no instability is detected. 

A high-level description of this RT restarting methodology for the sketched Ar\-nol\-di method with block size $\ell$ is given in \Cref{alg:restarting procedure}.
\begin{algorithm}[ht]
    \caption{RT-restarting for sketched Arnoldi}\label{alg:restarting procedure}
    \begin{algorithmic}[1]
        \Input A, $\pB$, $t$, $\mathtt{tol}$, $\mathtt{rtol}$, $m_{\text{restart}}$
        \Output Approximate solution $\vy_m(t)$
        \Statex\vspace{-0.6\baselineskip}
        \State Set $\done \gets \false$
        \While{$\done = \false$}
            \State Run \Cref{alg:residual sfom} with $m_{\max} = m_{\text{restart}}$ or until $\slope_m > \rtol\cdot\eta_{m-\update}$
            \If{$\eta_m < \tol$}
                \State $\done \gets \true$
            \Else
                \If{$\slope_m > \rtol\cdot\eta_{m-\update}$}
                    \State Set $m \gets {m-\update}$
                \EndIf
                \State Compute $\tau$ such that
                    $\max_{0 \leq \theta \leq \tau} \|\vr_m(\theta)\|_S < \mathtt{tol}$
                \State Set $\pB \gets [\ym{\tau},\vy_m^\prime(\tau),\dots,\vy_m^{(p-1)}(\tau)]$ and $t \gets t-\tau$
            \EndIf
        \EndWhile
        \State \Return $\ymt$
    \end{algorithmic}
\end{algorithm}

\section{Numerical experiments} \label{sec:experiments}
In this section, we report numerical experiments on realistic test problems in order to illustrate that the sketched Krylov residual norm yields a reliable stopping criterion for the sketched Arnoldi method. Additionally, we compare the performance of sketched Arnoldi to that of several other established methods, further adding to the existing evidence that it is a very  competitive method for computing the action of matrix functions on vectors.

Specifically, we compare our implementation of residual-based sketched Arnoldi to the residual-time restarting method from~\cite{bokn20,bkt21}\footnote{available at \url{https://team.kiam.ru/botchev/expm/}. We note that an adaptive version of the method is only implemented for the matrix exponential and we thus use the non-adaptive version for better comparability. We have, however, tested both methods for problems involving the exponential and the results were not substantially different.}, to KIOPS (Krylov with Incomplete Orthogonalization Procedure Solver) from~\cite{GaudreaultRainwaterTokman2018}\footnote{available at \url{https://gitlab.com/stephane.gaudreault/kiops}.}---which is based on truncated orthogonalization and sub time stepping driven by a sophisticated error estimate (which is \emph{not} based on the Krylov residual)---as well as the restarted Arnoldi method accelerated by randomized sketching that was recently proposed in~\cite{guidotti2025accelerating}\footnote{A C++ implementation of this method is provided by the authors at~\url{https://gitlab.com/nlg550/randomized-krylov}. However, in order to allow a meaningful comparison, we instead use a MATLAB implementation which is based on the optimized implementation \texttt{funm\_kryl} of the restarted Arnoldi method available at~\url{https://guettel.com/funm_kryl/}, replacing the Arnoldi process by the randomized Arnoldi process from \cite[Algorithm 2.2]{guidotti2025accelerating}.}. For brevity, we denote by
\begin{itemize}
    \item \FOM: the standard Arnoldi method (FOM), i.e.,~\eqref{eq:fom},
    \item \expRT/\phiRT: the residual-time restarting methods from~\cite{bokn20,bkt21} with \\ $m_{\text{restart}} = 30$,
    \item \restartrand: the restarted Arnoldi method accelerated by randomized sketching from~\cite{guidotti2025accelerating},
    \item \texttt{KIOPS250/500}: KIOPS with $m_{\max}=250/500$ from~\cite{GaudreaultRainwaterTokman2018},
    \item \texttt{sFOM250/500}: the sketched Arnoldi method (sFOM) with $m_{\text{restart}}=250/500$.
\end{itemize}

Both KIOPS and sketched Arnoldi can show significantly better performance if a good approximation for the required Krylov dimension is known in advance. Since this requires a priori knowledge, which cannot be expected in practice, we use the maximum Krylov dimensions $m_{\max} = 250,500$ in KIOPS (KIOPS employs a smaller sub time step if the desired accuracy is not reached after $m_{\max}$ iterations), and the restart lengths $m_{\text{restart}} = 250,500$ in sketched Arnoldi across all experiments (although hand-picked values could improve the performance even further).\footnote{Due to the truncated orthogonalization in these algorithms, restart lengths can be chosen much larger than one typically expects, as essentially the only two limiting factors are the memory required for storing the Krylov basis and the time required for evaluating $f$ at the compressed matrix.} For KIOPS, we use the default value $m_{\min} = 10$ for the minimum Krylov dimension.\footnote{This means that the error estimate is first evaluated after 10 iterations. Then, if the desired accuracy is not yet reached, the number of further Krylov iterations to perform until the next evaluation of the error estimate is adaptively determined by the KIOPS algorithm.} As mentioned in \Cref{sec:sketching}, we use sparse sign matrices as subspace embeddings in all experiments, with the sparsity parameter chosen as $\zeta = 1$. The sketching dimension is chosen as $s = 2m_{\text{restart}}$ and a truncation parameter of $k=2$ is used for \sFOM.\footnote{We also ran all experiments using the truncation parameters $k=1, 4$ and $8$. The larger truncation lengths $k = 4,8$ did not lead to a significant improvement in convergence speed or stability, while substantially increasing the computation time due to the larger number of inner products with long vectors. Using $k = 1$ significantly delayed convergence for the problems considered in \Cref{subsec:photonic_crystal,subsec:drum} and showed comparable performance to $k=2$ for the problem from \Cref{subsec:conv-diff}. We therefore do not include detailed results for these runs and always use the truncation parameter $k=2$ in the experiments we report.} For \restartrand, the sketching dimension $s$ and restart length $m_{\text{restart}}$ are chosen as $s = 160$ and $m_{\text{restart}} = 60$ in~\Cref{subsec:conv-diff} and~\Cref{subsec:photonic_crystal}, and as $s = 400$ and $m_{\text{restart}} = 100$ in~\Cref{subsec:drum}, following the choices made by the authors of~\cite{guidotti2025accelerating} for their experiments. In all experiments, \expRT~and \phiRT~use a restart length of $m_{\max} = 30$ iterations.

In our experiments, we track several different performance indicators for each algorithm: the number of iterations to convergence ($m$),\footnote{It should be noted that for \KIOPS, we only track the number of iterations in a sub step if the sub step is accepted, in order to not artificially inflate the iteration count.} the CPU time in seconds (\texttt{time[s]}), the (sketched) residual norm ($\eta_m$) and \emph{relative} error norm ($\xi_m^{\text{rel}}$) at termination, the number of matrix-vector products with $A$ (\mvecs), the number of inner products with vectors of size $n$ (\nprods) and size $s$ (\sprods), the number of matrix function evaluations (\mfuns), the size of the largest matrix function that needs to be evaluated (\mfunsize) and the number of other operations with vectors of length $n$ (\nops). \FOM~was not benchmarked in this way due to its high execution time and is only used as baseline for comparison of the convergence curves. The relative error norms are computed as
\[
\xi_m^{\text{rel}} := \frac{||\vy_{\text{ref}}(T) - \vy_m(T)||}{||\vy_{\text{ref}}(T)||},
\]
where the reference solution $\vy_{\text{ref}}(T)$ was pre-computed using \FOM, requiring a residual norm $\eta_m \leq 10^{-15}$.

In the implementation of \restartrand~and \sFOM, we use the MATLAB function \texttt{expmv}, which implements the algorithm proposed in~\cite{al2011computing} and allows to efficiently compute the action of the matrix exponential on a vector over equally spaced time intervals by reusing some computations.  

All algorithms are run until $\eta_m \leq \tol = 10^{-8}$. For \KIOPS~(which is not a residual based method), this is enforced by simply checking if $\eta_m \leq \tol$ after termination and supplying a smaller tolerance for the error estimate if $\eta_m > \tol$.

All experiments are carried out in MATLAB 25.1 (R2025a) on a PC with an Intel Core Ultra 7 165U CPU with a clock rate of 4.9 GHz and 32 GB RAM. The MATLAB code to reproduce the experiments can be found at \url{https://github.com/kriegerbuw/ks-res-ode}.

\begin{figure}
\centering
\pgfplotsset{height=0.45\linewidth,
            width=0.9\linewidth,
            compat=1.17,
            every axis/.append style={
                        legend style={/tikz/every even column/.append style={column sep=6pt}}, font=\footnotesize}}

\noindent%
\begin{tikzpicture}[scale=1]%
    \begin{semilogyaxis}[
        legend style={at={(.95,.05)}, anchor=south east},%north west},
        legend columns=2,
        xmin=45,
        xmax=155,
        grid=major,
        xlabel={\small $N$},
        ylabel={\small \texttt{time[s]}},
        ymin = 1e-1,
        ymax = 200,
        tick label style={font=\small}]
    \addplot[color=Plum,very thick,mark=triangle*]
	    table [x ={n},y ={t_sfom250}] {data/time/convdiff_t1.0_median.dat};
        \addlegendentry{\sFOMs}
        
    \addplot[color=NavyBlue,very thick,mark=triangle*]
	    table [x ={n},y ={t_sfom500}] {data/time/convdiff_t1.0_median.dat};
        \addlegendentry{\sFOMb}

    \addplot[color=BurntOrange,very thick,dashed,mark=*,mark options=solid]
        table [x ={n},y ={t_art}] {data/time/convdiff_t1.0_median.dat};
        \addlegendentry{\phiRT}

    \addplot[color=ForestGreen,very thick,densely dotted,mark=square*,mark options=solid]
        table [x ={n},y ={t_restart_rand}] {data/time/convdiff_t1.0_median.dat};
        \addlegendentry{\restartrand}
    
    \addplot[color=GreenYellow,very thick,dashdotted,mark=diamond*,mark options=solid]
        table [x ={n},y ={t_kiops250}] {data/time/convdiff_t1.0_median.dat};
        \addlegendentry{\KIOPSs}
    
    \addplot[color=RubineRed,very thick,dashdotted,mark=diamond*,mark options=solid]
        table [x ={n},y ={t_kiops500}] {data/time/convdiff_t1.0_median.dat};
        \addlegendentry{\KIOPSb}
    \end{semilogyaxis}
       
\end{tikzpicture}
\caption{Execution time for 3d convection diffusion problem with $T = 1$.}
\label{fig:convdiff time}
\end{figure}
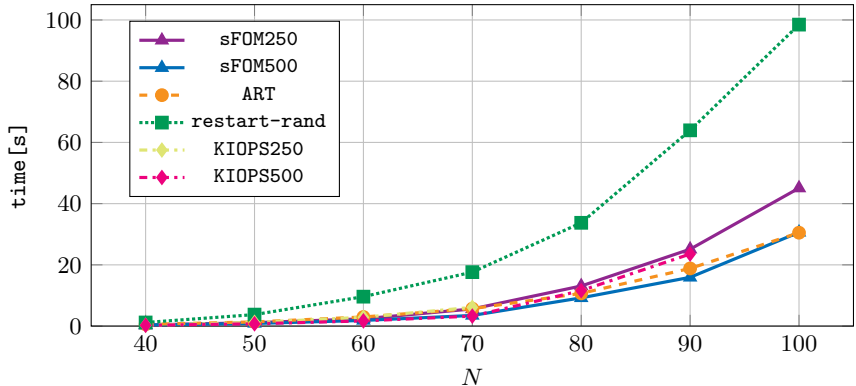

\subsection{Three-dimensional convection-diffusion equation}\label{subsec:conv-diff}
As a first model problem, we consider the semi-discretization of the three-dimensional convection-diffusion equation
\begin{equation*}\label{eq:condiff_op}
    u_t = -\nu\Delta u+w\nabla u,
\end{equation*}
on $[0, T] \times [0,1]^3$ by centered finite differences with $N$ grid points in each spatial dimension. The convection field is chosen as $w=(x\sin(x),y\cos(y),e^{z^2-1})$ and the diffusion coefficient is $\nu=5\cdot 10^{-3}$. We add a constant inhomogeneity $\vg$, which is a vector containing the values of the function $10e^{-100((x-\frac{1}{2})^2+(y-\frac{1}{2})^2+(z-\frac{1}{2})^2)}$ evaluated at the grid points, and choose the initial value $\vb_0$ as a vector with normally distributed entries, normalized to have a norm of 1. This leads to the ODE IVP
\[
\begin{cases}
    \begin{aligned}
        \vy^\prime(t) &= -A\vy(t) + \vg \text{ on }(0,T], \\
        \vy(0) &= \vb_0,
    \end{aligned} 
\end{cases}
\]
with solution
\[
\vy(t) = \exp(-tA)\vb + t\varphi_1(-tA)\vg = t\varphi_1(-tA)(\vg-A\vb) + \vb.
\]
In our tests, we vary $N$ between $50$ and $150$, leading to matrices of size between $n = 125,\!000$ and $n = 3,\!375,\!000$.

\Cref{fig:convdiff time} shows the CPU times required by the different algorithms to reach the desired accuracy at $T=1$ for all considered problem instances. Across all values of $N$, \sFOMb~is competitive with \KIOPSb, the otherwise fastest tested algorithm. At $N=120$, \sFOMs~starts to become a bit slower than \KIOPSb~and \sFOMb, as it has to restart. However, we can see that it is still competitive with \KIOPSs. For $N=110$, \KIOPSs~has to employ time stepping for the first time while \sFOM~still converges in $250$ iterations or less, leading to a speedup of about $1.67$. It should, however, be noted that after time stepping, \KIOPSs~always does the full $250$ iterations and \sFOMs~therefore becomes only marginally faster than \KIOPSs~as its number of iterations approaches $500$ for the larger problems. All versions of \KIOPS~and \sFOM~are significantly faster than \phiRT~and \restartrand~for this problem.

\begin{table}[t]
\caption{Performance indicators for 3d convection diffusion problem with $N = 150$ and $T = 1$.}
\label{tab:convdiff}
\centering
\scalebox{.7}{
\begin{tabular}{c|cccccc}
& \phiRT & \restartrand & \KIOPSs & \KIOPSb & \sFOMs & \sFOMb \\
 \hline
$m$ &  $644$ & $420$ & $500$ & $356$ & $400$ & $350$ \\
\texttt{time[s]} & $92.1598$ & $153.9566$ & $43.5597$ & $32.5509$ & $34.5991$ & $28.0607$ \\
$\eta_m$ & $5.3856 \cdot 10^{-9}$ & $9.2843 \cdot 10^{-13}$ & $2.7525 \cdot 10^{-17}$ & $6.4268 \cdot 10^{-9}$ & $2.4726 \cdot 10^{-9}$ & $7.7469 \cdot 10^{-10}$ \\
$\xi_m^{\text{rel}}$ & $6.4729 \cdot 10^{-12}$ & $4.8547 \cdot 10^{-14}$ & $1.1516 \cdot 10^{-14}$ & $6.666 \cdot 10^{-13}$ & $2.2995 \cdot 10^{-13}$ & $7.3458 \cdot 10^{-14}$ \\
\mvecs & $644$ & $420$ & $500$ & $356$ & $402$ & $351$ \\
\sprods & $0$ & $12811$ & $0$ & $0$ & $85404$ & $122852$ \\
\nprods & $10536$ & $0$ & $1502$ & $1069$ & $800$ & $700$ \\
\mfuns & $1330$ & $14$ & $30$ & $26$ & $17$ & $14$ \\
\mfunsize & $31$ & $420$ & $251$ & $357$ & $250$ & $350$ \\
\nops & $10536$ & $13170$ & $1498$ & $1067$ & $1600$ & $1400$ \\
\end{tabular}
}
\end{table}

To explain the observed CPU times, we take an exemplary look at the performance indicators measured for $N=150$, which can be found in~\Cref{tab:convdiff} (results look very similar across all $N$, so we only report them for the largest value). We observe that the performance indicators for \KIOPSb~and \sFOMb~look very similar, which leads to similar CPU times, the only real difference being the large number of (very cheap) inner products with vectors of size $s$, which is somewhat counteracted by the use of fewer (expensive) inner products with vectors of size $n$. \KIOPSs~and \sFOMs~take slightly longer as they need more iterations to converge and have to restart once. Although \phiRT~requires more matrix function evaluations as well as more inner products and other vector operations of size $n$ than \KIOPS~and \sFOM, its cheap matrix function evaluations (due to extremely small compressed matrices of size at most $m = 30$) may make it competitive with \KIOPS~and \sFOM~once they have to restart a couple of times, as their restarts are much more costly. Lastly, \restartrand~has the highest CPU time across all grid sizes due to the high number of operations with vectors of length $n$ that arise in the explicit orthogonalization of the basis with respect to $\langle\cdot,\cdot\rangle_S$ and the high cost associated with the matrix function evaluations in later iterations.

\begin{figure}
\centering
\pgfplotsset{height=0.45\linewidth,
            width=0.9\linewidth,
            compat=1.17,
            every axis/.append style={
                        legend style={/tikz/every even column/.append style={column sep=6pt}}, font=\footnotesize}}

\noindent%
\begin{tikzpicture}[scale=1]%
    \begin{semilogyaxis}[
        legend style={at={(-.0025,1.05)}, anchor= south west},
        legend columns=4,
        xmin=0,
        xmax=660,
        grid=major,
        xlabel={\small $m$},
        ylabel={\small $\eta_m$},
        ymin = 1e-12,
        ymax = 1e7,
        tick label style={font=\small}]

    \addplot[color=Black,very thick]
	    table [x ={m},y ={res_fom}] {data/convergence/convdiff_t1.0_convergence.dat};
        \addlegendentry{\texttt{FOM}}

    \addplot[color=Plum,very thick]
	    table [x ={m},y ={res_sfom250}] {data/convergence/convdiff_t1.0_convergence.dat};
        \addlegendentry{\sFOMs}

        \addplot[color=NavyBlue,very thick]
	    table [x ={m},y ={res_sfom500}] {data/convergence/convdiff_t1.0_convergence.dat};
        \addlegendentry{\sFOMb}

    \addplot[color=BurntOrange,very thick,dashed]
        table [x ={m},y ={res_art}] {data/convergence/convdiff_t1.0_convergence.dat};
        \addlegendentry{\phiRT}

    \addplot[color=ForestGreen,very thick,densely dotted]
        table [x ={m},y ={res_restart_rand}] {data/convergence/convdiff_t1.0_convergence.dat};
        \addlegendentry{\restartrand}

    \addplot[color=GreenYellow,very thick,dashdotted]
        table [x ={m},y ={res_kiops250}] {data/convergence/convdiff_t1.0_convergence.dat};
        \addlegendentry{\KIOPSs}
    
    \addplot[color=RubineRed,very thick,dashdotted]
        table [x ={m},y ={res_kiops500}] {data/convergence/convdiff_t1.0_convergence.dat};
        \addlegendentry{\KIOPSb}

    \addplot[color=Red,very thick,densely dotted]
        coordinates {(0,1e-8)
    (1000,1e-8)};
        \addlegendentry{target accuracy}

    \end{semilogyaxis}
\end{tikzpicture}
\caption{Convergence curves for 3d convection diffusion problem considered in \Cref{subsec:conv-diff} with $N = 150$ and $T = 1$.}
\label{fig:convergence convdiff}
\end{figure}
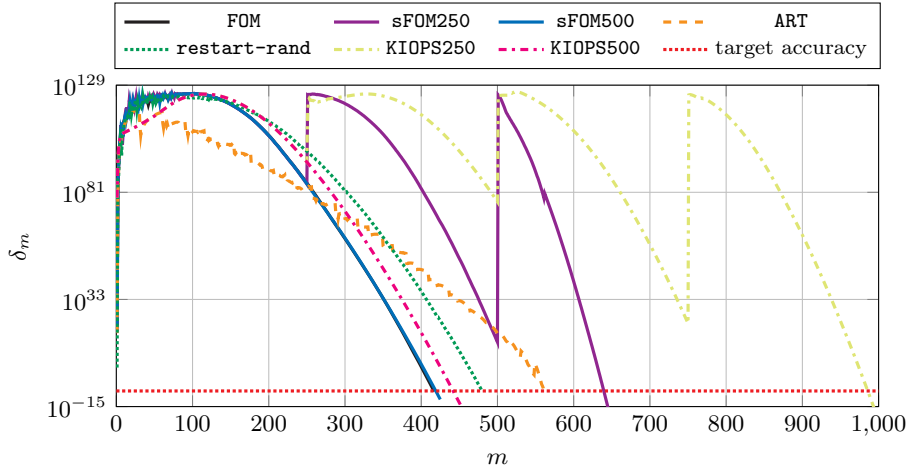

\Cref{fig:convergence convdiff} shows the convergence curves for the different algorithms for $N=150$. We observe that the sketched residual in \sFOMb~behaves almost identically to the residual of \FOM, while \KIOPSb~shows a slightly delayed convergence, as it is essentially the truncated Arnoldi algorithm at this point. Similar behavior can be observed for \sFOMs~and \KIOPSs, where convergence after restarting accelerates quickly for \sFOMs~and a bit more slowly for \KIOPSs. Due to its longer restarts, \restartrand~converges in less iterations than \phiRT, although its execution time is higher.

\subsection{Photonic crystal -- three-dimensional Maxwell equations}\label{subsec:photonic_crystal}
Our next model problem is taken from~\cite[Section~3.2]{bokn20} and was originally considered in~\cite{kole2001unconditionally}. We consider a semi-discretization of the three-dimensional Maxwell equations
\[
H_t = -\frac{1}{\mu_0} \nabla E, \qquad
E_t = -\frac{1}{\epsilon_0} \nabla H
\]
in a lossless, source-free medium. Here, $H$ denotes the magnetic field and $E$ denotes the electric field, and the permeability coefficient $\mu_0$ and permittivity coefficient $\epsilon_0$ depend on the spatial position. The setup of the domain and the boundary conditions are chosen exactly as in~\cite{bokn20}: We consider the spatial domain $\Omega = [-6.05, 6.05]^3$ which is filled with air (corresponding to relative permittivity $\epsilon_0 = 1$). Inside this domain,  there is a dielectric material (with relative permittivity $\epsilon_0 = 5$) in the region $\Omega_D = [-4.55, 4.55]^3$ which is pierced by 27 air-filled cylinders with radius $1.4$ and centers $(x_i, y_j, z_k) = (3.03i, 3.03j, 3.03k)$, where $i,j,k \in \{-1,0,1\}$. The permeability is $\mu_0 \equiv 1$ across the whole domain $\Omega$. We consider a perfectly conducting domain boundary, i.e., the boundary conditions are chosen such that the tangential electric field is zero. As initial values, we consider an all-zero magnetic field $H$ and a point light source located in the middle of the spatial domain as only non-zero values in $E$.

For the spatial discretization, we employ a finite-difference staggered Yee discretization on an $80 \times 80 \times 80$ grid (leading to a system of size $n = 3,\!188,\!646$). We seek a solution at final time $T = 10$. We obtain an ODE IVP
\[
\begin{cases}
    \begin{aligned}
        \vy^\prime(t) &= -A\vy(t) \text{ on }(0,T], \\
        \vy(0) &= \vb_0
    \end{aligned}
\end{cases}
\]
with solution
\[
\vy(t) = \exp(-tA)\vb_0.
\]

\begin{table}[t]
\caption{Performance indicators for photonic crystal problem on $80 \times 80 \times 80$ grid with $T=10$.}
\label{tab:crystal 80}
\centering
\scalebox{.7}{
\begin{tabular}{c|cccccc}
& \expRT & \restartrand & \KIOPSs & \KIOPSb & \sFOMs & \sFOMb \\
 \hline
$m$ & $735$ & $300$ & $500$ & $296$ & $325$ & $300$ \\
\texttt{time[s]} & $91.09$ & $100.3193$ & $34.2831$ & $19.5276$ & $22.1187$ & $18.0735$ \\
$\eta_m$ & $1.86 \cdot 10^{-9}$ & $8.0679 \cdot 10^{-15}$ & $2.0706 \cdot 10^{-132}$ & $1.4911 \cdot 10^{-13}$ & $2.9242 \cdot 10^{-25}$ & $5.7778 \cdot 10^{-16}$ \\
$\xi_m^{\text{rel}}$ & $8.95 \cdot 10^{-12}$ & $9.5395 \cdot 10^{-13}$ & $3.7886 \cdot 10^{-14}$ & $3.7818 \cdot 10^{-14}$ & $5.947 \cdot 10^{-12}$ & $1.7043 \cdot 10^{-13}$ \\
\mvecs & $735$ & $300$ & $500$ & $296$ & $325$ & $300$ \\
\sprods & $0$ & $9151$ & $0$ & $0$ & $68454$ & $90302$ \\
\nprods & $12015$ & $0$ & $1502$ & $889$ & $650$ & $600$ \\
\mfuns & $4434$ & $10$ & $28$ & $26$ & $14$ & $12$ \\
\mfunsize & $30$ & $300$ & $251$ & $297$ & $250$ & $300$ \\
\nops & $12015$ & $9390$ & $1498$ & $887$ & $1296$ & $1198$ \\
\end{tabular}
}
\end{table}

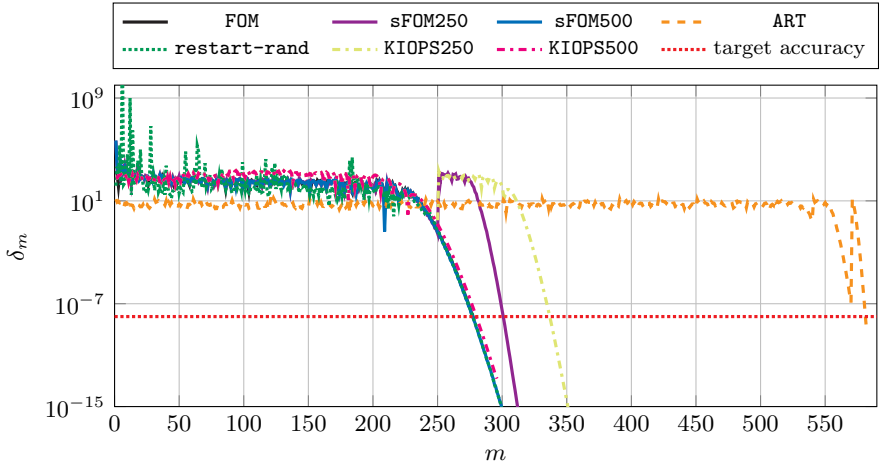
\begin{figure}
\centering
\pgfplotsset{height=0.45\linewidth,
            width=0.9\linewidth,
            compat=1.17,
            every axis/.append style={
                        legend style={/tikz/every even column/.append style={column sep=6pt}}, font=\footnotesize}}

\noindent%
\begin{tikzpicture}[scale=1]%
    \begin{semilogyaxis}[
        legend style={at={(-.0025,1.05)}, anchor= south west},
        legend columns=4,
        xmin=0,
        xmax=750,
        grid=major,
        xlabel={\small $m$},
        ylabel={\small $\eta_m$},
        ymin = 1e-15,
        ymax = 1e10,
        tick label style={font=\small}]

    \addplot[color=Black,very thick]
	    table [x ={m},y ={res_fom}] {data/convergence/crystal_80_t10.0_convergence.dat};
        \addlegendentry{\texttt{FOM}}

    \addplot[color=Plum,very thick]
	    table [x ={m},y ={res_sfom250}] {data/convergence/crystal_80_t10.0_convergence.dat};
        \addlegendentry{\sFOMs}

        \addplot[color=NavyBlue,very thick]
	    table [x ={m},y ={res_sfom500}] {data/convergence/crystal_80_t10.0_convergence.dat};
        \addlegendentry{\sFOMb}

    \addplot[color=BurntOrange,very thick,densely dashed]
        table [x ={m},y ={res_art}] {data/convergence/crystal_80_t10.0_convergence.dat};
        \addlegendentry{\expRT}

    \addplot[color=ForestGreen,very thick,densely dotted]
        table [x ={m},y ={res_restart_rand}] {data/convergence/crystal_80_t10.0_convergence.dat};
        \addlegendentry{\restartrand}

    \addplot[color=GreenYellow,very thick,dashdotted]
        table [x ={m},y ={res_kiops250}] {data/convergence/crystal_80_t10.0_convergence.dat};
        \addlegendentry{\KIOPSs}
    
    \addplot[color=RubineRed,very thick,dashdotted]
        table [x ={m},y ={res_kiops500}] {data/convergence/crystal_80_t10.0_convergence.dat};
        \addlegendentry{\KIOPSb}

    \addplot[color=Red,very thick,densely dotted]
        coordinates {(0,1e-8)
    (1000,1e-8)};
        \addlegendentry{target accuracy}

    \end{semilogyaxis}
\end{tikzpicture}
\caption{Convergence curves for photonic crystal problem on $80 \times 80 \times 80$ grid with $T=10$.}
\label{fig:convergence cyrstal 80}
\end{figure}

\Cref{tab:crystal 80} depicts the performance indicators and~\Cref{fig:convergence cyrstal 80} shows the convergence curves for this experiment. Due to the reasons already mentioned in \Cref{subsec:conv-diff}, both versions of \sFOM~are competitive with \KIOPSb, while \KIOPSs~dramatically overshoots the target accuracy as it only checks whether the target accuracy is reached at the end of every restart. \expRT~and \restartrand~are significantly slower than \KIOPS~and \sFOM, which is mostly caused by the high number of operations with vectors of size $n$, with \expRT~additionally requiring almost twice as many matrix-vector products as the other algorithms due to its delayed convergence.

\subsection{Vibrating membrane -- two-dimensional wave equation}\label{subsec:drum}
In this experiment, taken from~\cite[Section~5.2]{guidotti2025accelerating}, we simulate the vibrations of a circular membrane $\Omega$ of radius $r = 1$ that is attached to a rigid frame. At a time point $t$, the height of the membrane in a point $(x,y) \in \Omega$ is described by the solution $u$ of the wave equation
\[
u_{tt} = -\nu^2 \Delta u,
\]
where $\nu > 0$ is the speed at which transversal waves propagate in the membrane.  Using the same setup as in~\cite{guidotti2025accelerating}, we set the initial conditions to 
\[
u(x,y,0) = J_4(\eta_4\sqrt{x^2+y^2}),\qquad u_t(x,y,0) = 0
\]
where $J_4$ is the fourth-order Bessel function of the first kind and $\eta_4$ is the fourth positive root of $J_4$.

We semi-discretize the wave equation using a P1 finite element method (FEM), leading to an ODE IVP
\begin{equation}\label{eq:drum ode}
\begin{cases}
    \vy^{\prime\prime}(t) = -\nu^2 L \vy(t) \text{ on } (0,T],\\
    \vy(0) = \vb_0, \ \ \vy^\prime(0) = \vnull,
\end{cases}
\end{equation}
with $L = M^{-1}A$, where $A$ denotes the stiffness matrix and $M$ is the lumped mass matrix and the initial conditions are give by $[\vb_0]_k = J_4(\eta_4\sqrt{x_k^2+y_k^2})$, where $(x_k,y_k)$ are the coordinates of the $k$th node in the FEM mesh. When using a P1 finite element of size $2.5\cdot 10^{-3}$, this leads to a matrix of size $n = 580,\!012$. The solution to \eqref{eq:drum ode} is
\[
\vy(t) = \cos\br{\nu t\sqrt{L}}\vb_0.
\]

\begin{table}[t]
\caption{Performance indicators for the vibrating membrane problem with $T = 1.5$.}
\label{tab:drum}
\centering
\scalebox{.7}{
\begin{tabular}{c|cccccc}
& \expRT & \restartrand & \sFOMs & \sFOMb \\
 \hline
$m$ & $9765$ & $1300$ & $1850$ & $1950$ \\
\texttt{time[s]} & $504.178$ & $341.9677$ & $93.8612$ & $110.766$ \\
$\eta_m$ & $7.2574 \cdot 10^{-9}$ & $6.7072 \cdot 10^{-38}$ & $3.8647 \cdot 10^{-47}$ & $7.6118 \cdot 10^{-19}$ \\
$\xi_m^{\text{rel}}$ & $3.436 \cdot 10^{-10}$ & $4.8321 \cdot 10^{-9}$ & $3.4293 \cdot 10^{-10}$ & $3.3979 \cdot 10^{-10}$ \\
\mvecs & $9765$ & $1300$ & $3550$ & $3400$ \\
\sprods & $0$ & $65651$ & $869392$ & $1666318$ \\
\nprods & $161010$ & $0$ & $23545$ & $21277$ \\
\mfuns & $59176$ & $26$ & $137$ & $168$ \\
\mfunsize & $30$ & $1300$ & $250$ & $500$ \\
\nops & $161010$ & $66850$ & $20721$ & $19487$ \\
\end{tabular}
}
\end{table}

For \expRT~and \KIOPS, which are specifically designed to solve first-order ODEs, we transform \eqref{eq:drum ode} to a first-order problem by defining
\[
\vz(t) = \begin{bmatrix}
    \vy(t) \\ \vy^\prime(t)
\end{bmatrix}, \qquad \mathcal{L} = \begin{bmatrix}
    0 & -I \\ \nu^2L & 0
\end{bmatrix}, \qquad \vz_0 = \begin{bmatrix}
    \vb_0 \\ \vnull
\end{bmatrix}.
\]
This yields
\begin{equation}\label{eq:first_order_formulation}
\begin{cases}
\vz^\prime(t) = -\mathcal{L}\vz(t)  \text{ on } (0,T],\\
\vz(0) = \vz_0,
\end{cases}
\end{equation}
with solution
\[
\vz(t) = \exp\br{-t\mathcal{L}}\vz_0.
\]
Note that while this first-order formulation requires working with vectors of length $2n$ (which, e.g., increases the orthogonalization cost and the memory required to store the Krylov basis), matrix vector products with $\mathcal{L}$ can be performed at essentially the same cost as those with $L$, via
\[
\begin{bmatrix}
    \vv^{(1)} \\ \vv^{(2)}
\end{bmatrix} \mapsto \begin{bmatrix}
    -\vv^{(2)} \\ \nu^2L\vv^{(1)}
\end{bmatrix}.
\]

We also use the equivalent first-order formulation~\eqref{eq:first_order_formulation} for solving the projected IVP in  \restartrand~and \sFOM, as this led to improved stability in preliminary experiments.

\Cref{tab:drum} shows the performance indicators and \Cref{fig:convergence drum} shows the convergence curves from our experiments where we choose $\nu = 0.85$ and $T = 1.5$. Note that since both \KIOPSs~and \KIOPSb~failed to converge, they are not listed in the table\footnote{KIOPS was also tested with other maximum Krylov dimensions---up to $m_{\max} = 1500$---of which all either produced NaNs or reduced the time interval to below machine precision.} and \expRT~is not plotted as it behaves very similarly as in the other experiments but makes \Cref{fig:convergence drum} substantially less readable. For the second order problem, \sFOM~was up to five times as fast as \expRT~and \restartrand. It should be noted that when doing an RT-restart with time step $\tau$ we in general have $\vu_m^\prime(\tau) \neq \vnull$. Therefore, after restarting, \sFOM~employs a block Krylov approach (cf.~\Cref{example:block_krylov}). This increases \mvecs~by a factor of two as well as \sprods~and \nprods~by a factor of four after the first restart, but has a much milder impact on the computation time, as all corresponding operations are blocked. Due to the high number of iterations, \restartrand~has to evaluate functions of several comparatively large matrices, up to a matrix dimension of 1200, which becomes extremely costly in addition to the high number of \nops.

\begin{figure}
    \centering
    \pgfplotsset{height=0.45\linewidth,
            width=0.9\linewidth,
            compat=1.17,
            every axis/.append style={
                        legend style={/tikz/every even column/.append style={column sep=6pt}}, font=\footnotesize}}

\noindent%
\begin{tikzpicture}[scale=1]%
    \begin{semilogyaxis}[
        legend style={at={(-.06,1.05)}, anchor= south west},
        legend columns=5,
        xmin=0,
        xmax=2000,
        grid=major,
        xlabel={\small $m$},
        ylabel={\small $\eta_m$},
        ymin = 1e-15,
        ymax = 1e20,
        tick label style={font=\small}]

    % \addplot[color=BurntOrange,very thick,dashed]
    %     table [x ={m},y ={res_art}] {data/convergence/drum_thin_t1.275_convergence.dat};
    %     \addlegendentry{\expRT}

    \addplot[color=Black,very thick]
	    table [x ={m},y ={res_fom}] {data/convergence/drum_t1.275_convergence.dat};
        \addlegendentry{\texttt{FOM}}

    \addplot[color=Plum,very thick]
	    table [x ={m},y ={res_sfom250}] {data/convergence/drum_t1.275_convergence.dat};
        \addlegendentry{\sFOMs}

        \addplot[color=NavyBlue,very thick]
	    table [x ={m},y ={res_sfom500}] {data/convergence/drum_t1.275_convergence.dat};
        \addlegendentry{\sFOMb}

    \addplot[color=ForestGreen,very thick,densely dotted]
        table [x ={m},y ={res_restart_rand}] {data/convergence/drum_t1.275_convergence.dat};
        \addlegendentry{\restartrand}

    \addplot[color=Red,very thick,densely dotted]
        coordinates {(0,1e-8)
    (6000,1e-8)};
        \addlegendentry{target accuracy}

    \end{semilogyaxis}
\end{tikzpicture}
    \caption{Convergence curves for the vibrating membrane problem with $T = 1.5$.}
    \label{fig:convergence drum}
\end{figure}
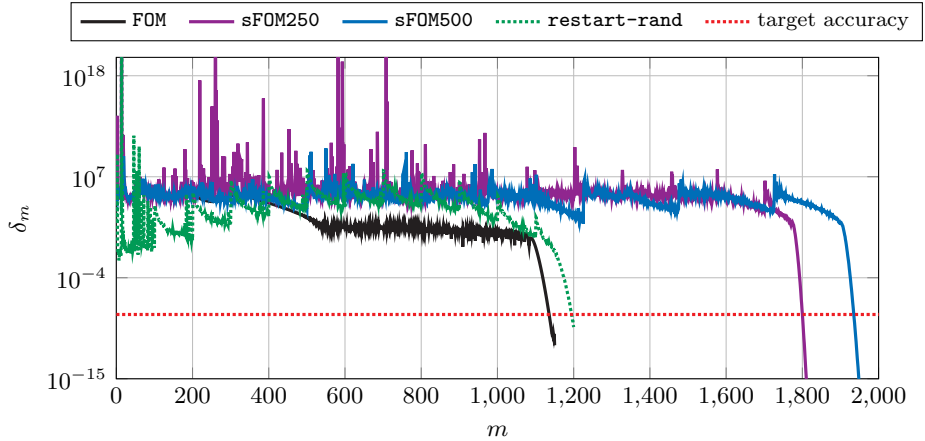

We also see that \sFOMb~takes more iterations to converge than \sFOMs, leading to very similar numbers in most performance indicators and \sFOMs~being about 20 seconds faster. The reason for this is that in their first (non-blocked) restarts, both algorithms are able to reduce the residual to just slightly above the desired tolerance of $10^{-8}$ for $30-40\%$ of the time interval but only actually reach the tolerance for $<1\%$ of the interval, leading to extremely costly first restarts for both. After the restart, however, \sFOMb~converges slightly faster than \sFOMs, as can be seen by it not needing the entire 250 iterations ``lost'' in the first restart more than \sFOMs.

\section{Conclusions \& Outlook} \label{sec:conclusions}
We have introduced a general framework that specifies conditions under which the Krylov ODE residual is easily and efficiently computable, which encompasses both polynomial and rational Krylov methods employing arbitrary inner products. As a particularly important special case that is covered by our framework, we have discussed Krylov methods with randomized sketching in depth. In particular, our framework allows us to derive a reliable, rigorous stopping criterion for the sketched Arnoldi method in the context of solving ODEs, while so far, only heuristic stopping criteria have been available in the literature. Additionally, we have incorporated residual-time restarting into the sketched Arnoldi method, to make it more stable and efficient. Numerical experiments on large-scale real-world problems demonstrate the use of the sketched residual as a stopping criterion for the sketched Arnoldi method as well as the competitiveness of sketched Arnoldi with several state-of-the-art methods.

As a future research goal, it would be valuable to derive rigorous relations between the residuals of the sketched Arnoldi method and the standard (full) Arnoldi method. This would allow us to more directly characterize possible delays of convergence introduced by sketching. Another interesting area for future research is the extension of our methodology to \emph{fractional} differential equations, whose solutions can be expressed in terms of Mittag--Leffler functions~\cite{haubold2011mittag}.

\paragraph{Acknowledgment}
We are indebted to Mike Botchev and Leonid Knizhnerman for providing us the matrix and initial conditions used in the experiment in~\Cref{subsec:photonic_crystal}.

\bibliography{matrixfunctions}

@PREAMBLE{ "\newcommand{\noopsort}[1]{} "
# "\newcommand{\printfirst}[2]{#1} "
# "\newcommand{\singleletter}[1]{#1} "
# "\newcommand{\switchargs}[2]{#2#1} " }

@string{jcam = {J.\ Comput.\ Appl.\ Math.}}

@string{laa = {Linear Algebra Appl.}}

@string{mathcomp = {Math.\ Comp.}}

@string{nlaa = {Numer.\ Linear Algebra Appl.}}

@string{simax = {SIAM J.\ Matrix Anal.\ Appl.}}

@string{sinum = {SIAM J.\ Numer.\ Anal.}}

@string{sisc = {SIAM J.\ Sci.\ Comput.}}

@string{acta = {Acta Numerica}}

@phdthesis{Guettel2010,
	Author = {Stefan G\"{u}ttel},
	School = {{Fakult\"{a}t f\"{u}r Mathe\-matik und Informatik der Technischen Universit\"{a}t Bergakademie Freiberg}},
	Title = {Rational {K}rylov Methods for Operator Functions},
	Type = {{PhD thesis}},
	Year = {2010},
}

@article{Saad1992,
	Author = {Yousef Saad},
	Date-Added = {2009-09-11 07:41:37 +0200},
	Date-Modified = {2009-09-11 07:41:53 +0200},
	Journal = sinum,
	Month = {February},
	Number = {1},
	Pages = {209--228},
	Title = {Analysis of Some {K}rylov Subspace Approximations to the matrix Exponential Operator},
	Volume = {29},
	Year = {1992}}

@book{Higham2008,
	Author = {Nicholas J. Higham},
	Publisher = {SIAM},
	address = {Philadelphia},
	Title = {Functions of Matrices: Theory and Computation},
	Year = {2008}}

@article{DruskinKnizhnerman1989,
	Author = {Vladimir  Druskin and Leonid  Knizhnerman},
	Date-Added = {2009-09-11 06:58:07 +0200},
	Date-Modified = {2009-09-11 06:58:07 +0200},
	Journal = {U.S.S.R.\ Comput.\ Math.\ Math.\ Phys.},
	Number = {6},
	Pages = {112--121},
	Title = {Two Polynomial Methods of Calculating Functions of Symmetric Matrices},
	Volume = {29},
	Year = {1989}}

@article{BeckermannReichel2009,
	Author = {B. Beckermann and L. Reichel},
	Journal = {SIAM J. Numer. Anal.},
	Pages = {3849--3883},
	Title = {Error estimation and evaluation of matrix functions via the {F}aber transform},
	Volume = {47},
	Year = {2009}}

@article{HighamAlMohy2010,
author = {N. J. Higham and A. H. Al-Mohy},
title = {Computing matrix functions},
journal = acta,
volume = {19},
month = {4},
year = {2010},
pages = {159--208},
}

@article{HochbruckOstermann2010,
author = {M. Hochbruck and A. Ostermann},
title = {Exponential integrators},
journal = acta,
volume = {19},
month = {4},
year = {2010},
pages = {209--286},
}

@article{GuettelKnizhnerman2013,
	year = {2013},
	title = {A black-box rational {A}rnoldi variant for {C}auchy--{S}tieltjes matrix functions},
	author = {S. G\"{u}ttel and L. Knizhnerman},
	journal = {BIT},
	volume={53},
	number={3},
	pages={595--616}
}

@article{FrommerGuettelSchweitzer2014a,
	author = {A. Frommer and S. G{\"u}ttel and M. Schweitzer},
	journal = {SIAM J.\ Matrix Anal.\ Appl.},
	volume = {35},
	pages = {661-683},
	year = {2014},
	title = {Efficient and Stable {A}rnoldi Restarts for Matrix Functions Based on Quadrature},
}

@article{Guettel2013,
author = {S. G\"uttel},
title = {Rational {K}rylov approximation of matrix functions: {N}umerical methods and optimal pole selection},
journal = {GAMM-Mitt.},
volume = {36},
number = {1},
pages = {8--31},
year = {2013},
}

@article{Arnoldi1951,
	author = {W. E. Arnoldi},
	title = {The principle of minimized iteration in the solution of the matrix eigenvalue problem},
	journal = {Q.\ Appl.\ Math.},
	volume = {9},
	year = {1951},
	pages = {17--29},
}

@article{GuettelKressnerLund2020,
  title={Limited-memory polynomial methods for large-scale matrix functions},
  author={G{\"u}ttel, Stefan and Kressner, Daniel and Lund, Kathryn},
  journal={GAMM-Mitteilungen},
  volume={43},
  number={3},
  pages={e202000019},
  year={2020},
  publisher={Wiley Online Library}
}

@inproceedings{sarlos2006improved,
  title={Improved approximation algorithms for large matrices via random projections},
  author={Sarl{\'o}s, T.},
  booktitle={47th Annual IEEE Symposium on Foundations of Computer Science (FOCS'06)},
  pages={143--152},
  year={2006},
  organization={IEEE}
}

@incollection{DrineasMahoneyMuthukrishnan2006,
  title={Subspace sampling and relative-error matrix approximation: {C}olumn-based methods},
  author={Drineas, P. and Mahoney, M. W. and Muthukrishnan, S.},
  booktitle={Approximation, Randomization, and Combinatorial Optimization. Algorithms and Techniques},
  pages={316--326},
  year={2006},
  publisher={Springer},
  doi={10.1007/11830924_30}
}

@article{woodruff2014sketching,
  title={Sketching as a tool for numerical linear algebra},
  author={Woodruff, D. P.},
  journal={Found. Trends Theor. Comput. Sci.},
  volume={10},
  number={1--2},
  pages={1--157},
  year={2014}
}

@article{MartinssonTropp2020,
  title={Randomized numerical linear algebra: {F}oundations and algorithms},
  author={Martinsson, P.-G. and Tropp, J. A.},
  journal={Acta Numer.},
  volume={29},
  pages={403--572},
  year={2020},
  publisher={Cambridge University Press}
}

@article{BalabanovNouy2019,
  title={Randomized linear algebra for model reduction. {P}art {I}: {G}alerkin methods and error estimation},
  author={Balabanov, O. and Nouy, A.},
  journal={Adv.\ Comput.\ Math.},
  volume={45},
  number={5},
  pages={2969--3019},
  year={2019},
  publisher={Springer}
}

@article{GaudreaultRainwaterTokman2018,
  title={{KIOPS:} {A} fast adaptive {K}rylov subspace solver for exponential integrators},
  author={Gaudreault, St{\'e}phane and Rainwater, Greg and Tokman, Mayya},
  journal= {J.\ Comput.\ Phys.},
  volume={372},
  pages={236--255},
  year={2018},
  publisher={Elsevier}
}

@incollection{Koskela2015,
  title={Approximating the matrix exponential of an advection-diffusion operator using the incomplete orthogonalization method},
  author={Koskela, Antti},
  booktitle={Numerical Mathematics and Advanced Applications-ENUMATH 2013},
  pages={345--353},
  year={2015},
  publisher={Springer}
}

@article{GuettelSchweitzer2023,
  title={Randomized sketching for {K}rylov approximations of large-scale matrix functions},
  author={G{\"u}ttel, S. and Schweitzer, M.},
  journal=simax,
  volume={44},
  number={3},
  pages={1073--1095},
  year={2023},
  publisher={SIAM}
}

@article{cortinovis2024speeding,
  title={Speeding Up {K}rylov Subspace Methods for Computing {$f(A)b$} via Randomization},
  author={Cortinovis, Alice and Kressner, Daniel and Nakatsukasa, Yuji},
  journal=simax,
  volume={45},
  number={1},
  pages={619--633},
  year={2024},
  publisher={SIAM}
}

@article{palitta2025sketched,
  title={Sketched and truncated polynomial {K}rylov methods: {E}valuation of matrix functions},
  author={Palitta, Davide and Schweitzer, Marcel and Simoncini, Valeria},
  journal=nlaa,
  volume={32},
  number={1},
  pages={e2596},
  year={2025},
  publisher={Wiley Online Library}
}

@article{guidotti2025accelerating,
  title={Accelerating a restarted {K}rylov method for matrix functions with randomization},
  author={Guidotti, Nicolas L and Martinsson, Per-Gunnar and Acebr{\'o}n, Juan A and Monteiro, Jos{\'e}},
  journal={arXiv preprint arXiv:2503.22631},
  year={2025}
}

@Article{bgh13,
  author = "Botchev, Mike A. and Grimm, Volker and Hochbruck, Marlis",
  title = "Residual, Restarting, and {Richardson} Iteration for the Matrix
                  Exponential",
  journal = sisc,
  year = 2013,
  volume = 35,
  number = 3,
  pages = "A1376–A1397",
  month = jan,
  issn = "1095-7197",
  doi = "10.1137/110820191",
  }

@Article{bks23,
  author = "Botchev, Mike A. and Knizhnerman, Leonid A. and Schweitzer, Marcel",
  title = "{Krylov} Subspace Residual and Restarting for Certain Second Order
                  Differential Equations",
  journal = sisc,
  year = 2023,
  pages = "S223–S253",
  month = jul,
  issn = "1095-7197",
  doi = "10.1137/22m1503300",
  }

@Article{bkt21,
  author = "Botchev, Mike A. and Knizhnerman, Leonid A. and Tyrtyshnikov, Eugene E.",
  title = "Residual and Restarting in {Krylov} Subspace Evaluation of the
                  $\varphi$ Function",
  journal = sisc,
  year = 2021,
  volume = 43,
  number = 6,
  pages = "A3733–A3759",
  month = jan,
  issn = "1095-7197",
  doi = "10.1137/20m1375383"
  }

@Article{bokn20,
  author = "Botchev, Mike A. and Knizhnerman, Leonid A.",
  title = "{ART}: {Adaptive} Residual-Time Restarting for {Krylov} Subspace Matrix
                  Exponential Evaluations",
  journal = jcam,
  year = 2020,
  volume = 364,
  pages = 112311,
  month = jan,
  issn = "0377-0427",
  doi = "10.1016/j.cam.2019.06.027",
  url = "http://dx.doi.org/10.1016/j.cam.2019.06.027",
  publisher = "Elsevier BV"
  }

@Article{sidj98,
  author = "Sidje, Roger B.",
  title = "Expokit: {A} Software Package for Computing Matrix Exponentials",
  journal = "ACM Trans. Math. Software",
  year = 1998,
  volume = 24,
  number = 1,
  pages = "130-156",
  month = mar,
  issn = "1557-7295",
  doi = "10.1145/285861.285868"
  }

@article{Gautschi1961,
  title={Numerical integration of ordinary differential equations based on trigonometric polynomials},
  author={Gautschi, Walter},
  journal={Numer.\ Math.},
  volume={3},
  number={1},
  pages={381--397},
  year={1961},
  publisher={Springer}
}

@article{HochbruckLubich1999,
  title={A {G}autschi-type method for oscillatory second-order differential equations},
  author={Hochbruck, Marlis and Lubich, {\relax Ch}},
  journal={Numer.\ Math.},
  volume={83},
  number={3},
  pages={403--426},
  year={1999},
  publisher={Springer}
}

@article{guttel2024sketch,
  title={A sketch-and-select {A}rnoldi process},
  author={G{\"u}ttel, Stefan and Simunec, Igor},
  journal=sisc,
  volume={46},
  number={4},
  pages={A2774--A2797},
  year={2024},
  publisher={SIAM}
}

@book{fattorini2011second,
  title={Second Order Linear Differential Equations in Banach Spaces},
  author={Fattorini, Hector O},
  series={North-Holland Mathematics Studies},
  volume={108},
  year={1985},
  publisher={Elsevier}
}

@article{maier2025energy,
  title = {Energy-preserving iteration schemes for Gauss collocation integrators},
  journal = laa,
  year = {2025},
  author = {Stefan Maier and Nicole Marheineke and Andreas Frommer},
  note = {published online}
}

@article{botchev2013block,
  title={A block {K}rylov subspace time-exact solution method for linear ordinary differential equation systems},
  author={Botchev, Mike A},
  journal=nlaa,
  volume={20},
  number={4},
  pages={557--574},
  year={2013},
  publisher={Wiley Online Library}
}

@article{celledoni1997krylov,
  title={A {K}rylov projection method for systems of {ODE}s},
  author={Celledoni, Elena and Moret, Igor},
  journal={Appl.\ Numer.\ Math.},
  volume={24},
  number={2-3},
  pages={365--378},
  year={1997},
  publisher={Elsevier}
}

@article{kole2001unconditionally,
  title={Unconditionally stable algorithms to solve the time-dependent {M}axwell equations},
  author={Kole, JS and Figge, Marc Thilo and De Raedt, Hans},
  journal={Phys.\ Rev.\ E},
  volume={64},
  number={6},
  pages={066705},
  year={2001},
  publisher={APS}
}

@inproceedings{cohen2016nearly,
  title={Nearly tight oblivious subspace embeddings by trace inequalities},
  author={Cohen, Michael B},
  booktitle={Proceedings of the twenty-seventh annual ACM-SIAM symposium on Discrete algorithms (SODA)},
  pages={278--287},
  year={2016},
  organization={SIAM}
}

@article{tropp2019streaming,
  title={Streaming low-rank matrix approximation with an application to scientific simulation},
  author={Tropp, Joel A and Yurtsever, Alp and Udell, Madeleine and Cevher, Volkan},
  journal=sisc,
  volume={41},
  number={4},
  pages={A2430--A2463},
  year={2019},
  publisher={SIAM}
}

@article{crouzeix2017numerical,
  title={The numerical range is a {$(1+\sqrt{2})$}-spectral set},
  author={Crouzeix, Michel and Palencia, C{\'e}sar},
  journal=simax,
  volume={38},
  number={2},
  pages={649--655},
  year={2017},
  publisher={SIAM}
}

@article{haubold2011mittag,
  title={Mittag--{L}effler functions and their applications},
  author={Haubold, Hans J and Mathai, Arak M and Saxena, Ram K},
  journal={J.\ Appl.\ Math.},
  volume={2011},
  number={1},
  pages={298628},
  year={2011},
  publisher={Wiley Online Library}
}

@article{nakatsukasa2024fast,
  title={Fast and accurate randomized algorithms for linear systems and eigenvalue problems},
  author={Nakatsukasa, Yuji and Tropp, Joel A},
  journal=simax,
  volume={45},
  number={2},
  pages={1183--1214},
  year={2024},
  publisher={SIAM}
}

@article{palitta2025mateq,
  title={Sketched and truncated polynomial {K}rylov subspace methods: {M}atrix {S}ylvester equations},
  author={Palitta, Davide and Schweitzer, Marcel and Simoncini, Valeria},
  journal=mathcomp,
  volume={94},
  number={354},
  pages={1761--1792},
  year={2025}
}

@article{balabanov2022randomized,
  title={Randomized {G}ram--{S}chmidt process with application to {GMRES}},
  author={Balabanov, Oleg and Grigori, Laura},
  journal=sisc,
  volume={44},
  number={3},
  pages={A1450--A1474},
  year={2022},
  publisher={SIAM}
}

@article{burke2025gmres,
  title={{GMRES} with randomized sketching and deflated restarting},
  author={Burke, Liam and G{\"u}ttel, Stefan and Soodhalter, Kirk M},
  journal=simax,
  volume={46},
  number={1},
  pages={702--725},
  year={2025},
  publisher={SIAM}
}

@article{fasi2024challenges,
  title={Challenges in computing matrix functions},
  author={Fasi, Massimiliano and Gaudreault, St{\'e}phane and Lund, Kathryn and Schweitzer, Marcel},
  journal={arXiv preprint arXiv:2401.16132},
  year={2024}
}

@article{al2011computing,
  title={Computing the action of the matrix exponential, with an application to exponential integrators},
  author={Al-Mohy, Awad H and Higham, Nicholas J},
  journal=sisc,
  volume={33},
  number={2},
  pages={488--511},
  year={2011},
  publisher={SIAM}
}

@article{NiesenWritght2012,
author = {Niesen, Jitse and Wright, Will M.},
title = {Algorithm 919: A Krylov Subspace Algorithm for Evaluating the $\varphi$-Functions Appearing in Exponential Integrators},
year = {2012},
issue_date = {April 2012},
publisher = {Association for Computing Machinery},
address = {New York, NY, USA},
volume = {38},
number = {3},
issn = {0098-3500},
url = {https://doi.org/10.1145/2168773.2168781},
doi = {10.1145/2168773.2168781},
journal = {ACM Trans. Math. Softw.},
month = apr,
articleno = {22},
numpages = {19},
keywords = {Krylov subspace methods, exponential integrators, matrix exponential}
}

@book{ArendtBattyHieberNeubrander2011,
    author = {Arendt, Wolfgang and Batty, Charles J.K. and Hieber, Matthias and Neubrander, Frank},
    title = {Vector-valued Laplace Transforms and Cauchy Problems (Second edition)},
    publisher = {Birkhäuser, Basel},
    year = {2011}
}

@techreport{van2020mittag,
  title={The {M}ittag-{L}effler function},
  author={Van Mieghem, Piet},
  note={arXiv preprint arXiv:2005.13330},
  year={2020}
}
\bibliographystyle{siam}

\end{document}